\documentclass[12pt,reqno]{amsart}
\usepackage{amsthm, amscd, amsfonts, amssymb, graphicx, color}
\usepackage[bookmarksnumbered, colorlinks, plainpages,linkcolor=green,anchorcolor=black,citecolor=blue,urlcolor=black]{hyperref}
\usepackage{enumerate}
\usepackage{exscale}
\usepackage[all]{xy}
\usepackage{relsize}
\usepackage{pdfsync}
\numberwithin{equation}{section}

\makeatletter
\@namedef{subjclassname@2020}{%
  \textup{2020} Mathematics Subject Classification}
\makeatother

\usepackage[includemp,body={398pt,550pt},footskip=30pt,%
            marginparwidth=60pt,marginparsep=10pt]{geometry}

\newtheorem{theorem}{Theorem}[section]
\newtheorem{lemma}[theorem]{Lemma}
\newtheorem{proposition}[theorem]{Proposition}
\newtheorem{corollary}[theorem]{Corollary}
\theoremstyle{definition}

\numberwithin{equation}{section}

\newcommand{\h}{\mathcal{H}}

\newcommand{\C}{\mathbb{C}}

\newcommand{\Z}{\mathbb{Z}}

\newcommand{\res}{\mathrm{restricted}}

\begin{document}
\title[Reverse Carleson inequalities for weighted Fock spaces]{Reverse Carleson inequalities for weighted Fock spaces with $A_{\infty}$-type weights}
\date{May 30, 2026.
}
\author[J. Chen]{Jiale Chen}
\address{Jiale Chen, School of Mathematics and Statistics, Shaanxi Normal University, Xi'an 710119, China.}
\email{jialechen@snnu.edu.cn}

\thanks{This work was supported by National Natural Science Foundation (No. 12501170), Young Talent Fund of Association for Science and Technology in Shaanxi (No. 20260506) and the Fundamental Research Funds for the Central Universities (No. GK202601003) of China.}

\subjclass[2020]{30H20, 46E15}
\keywords{Dominating set, reverse Carleson measure, weighted Fock space, $A_{\infty}$-type weight}


\begin{abstract}
  \noindent We characterize dominating sets for weighted Fock spaces $F^p_{\alpha,w}$ induced by Muckenhoupt $A_{\infty}$ weights, which leads to a sufficient condition for reverse Carleson measures for the spaces $F^p_{\alpha,w}$ with restricted $A_{\infty}$ weights. Applications to invertibility of Toeplitz operators, closed range property of Volterra operators, and atomic decomposition of $F^p_{\alpha,w}$ are given.
\end{abstract}
\maketitle


\section{Introduction}
\allowdisplaybreaks[4]

The purpose of this paper is to investigate dominating sets and reverse Carleson measures for a class of weighted Fock spaces. We begin with some basic notions. A non-negative and locally integrable function $w$ on the complex plane $\C$ is said to be a weight. Given $0<p,\alpha<\infty$ and a weight $w$, the space $L^p_{\alpha,w}$ consists of measurable functions $f$ on $\C$ such that
$$\|f\|^p_{L^p_{\alpha,w}}:=\int_{\C}|f(z)|^pe^{-\frac{p\alpha}{2}|z|^2}w(z)dA(z)<\infty,$$
where $dA$ is the Lebesgue measure on $\C$. The weighted Fock space $F^p_{\alpha,w}$ is defined as
$$F^p_{\alpha,w}:=L^p_{\alpha,w}\cap\h(\C)$$
with the inherited (quasi-)norm, where $\h(\C)$ is the space of entire functions on $\C$. If $w=\frac{\alpha}{\pi}$, then we obtain the weighted spaces $L^p_{\alpha}$ and the standard Fock spaces $F^p_{\alpha}$. We refer to \cite{Zh} for a brief account on Fock spaces.

It is well-known that for $0<\alpha<\infty$, the Fock space $F^2_{\alpha}$ is a closed subspace of $L^2_{\alpha}$. Consequently, there exists an orthogonal projection $P_{\alpha}$ from $L^2_{\alpha}$ onto $F^2_{\alpha}$, which is called the Fock projection and is given by
$$P_{\alpha}(f)(z):=\frac{\alpha}{\pi}\int_{\C}f(\xi)\overline{K_z(\xi)}e^{-\alpha|\xi|^2}dA(\xi),\quad z\in\C, \quad f\in L^2_{\alpha},$$
where $K_z(\xi):=e^{\alpha \overline{z}\xi}$ is the reproducing kernel of $F^2_{\alpha}$. For any $1\leq p<\infty$, the operator $P_{\alpha}$ actually extends to a bounded projection from $L^p_{\alpha}$ onto $F^p_{\alpha}$; see \cite[Theorem 7.1]{JPR}.

To characterize the invertible products of Toeplitz operators on Fock spaces $F^p_{\alpha}$, Isralowitz \cite{Is} investigated the weighted boundedness of the Fock projection $P_{\alpha}$ on $L^p_{\alpha,w}$. Here and in the sequel, we use $Q$ to denote a square in $\C$ with sides parallel to the coordinate axes, and write $\ell(Q)$ for its side length. As usual, $p'$ denotes the conjugate exponent of $p$, i.e. $1/p+1/p'=1$, for $1<p<\infty$. Given $1<p<\infty$, a weight $w$ is said to belong to the class $A^{\res}_p$ if for some (equivalently for any) fixed $r>0$,
$$\sup_{Q:\ell(Q)=r}\left(\frac{1}{A(Q)}\int_{Q}wdA\right)
\left(\frac{1}{A(Q)}\int_{Q}w^{-\frac{p'}{p}}dA\right)^{\frac{p}{p'}}<\infty,$$
and $w$ is said to belong to the class $A^{\res}_1$ if for some (equivalently for any) fixed $r>0$,
$$\sup_{Q:\ell(Q)=r}\left(\frac{1}{A(Q)}\int_{Q}wdA\right)\left\|w^{-1}\right\|_{L^{\infty}(Q)}<\infty.$$
It was proved in \cite[Theorem 3.1]{Is} for $1<p<\infty$ and later in \cite[Proposition 2.7]{CFP} for $p=1$ that $P_{\alpha}$ is bounded on $L^p_{\alpha,w}$ if and only if $w\in A_p^{\res}$. A major drawback of $A^{\res}_p$ weights is that they do not satisfy a reverse H\"older inequality (see \cite[Remark 2.5]{CFP}), which is a key feature of the classical Muckenhoupt weights. Recall that for $1<p<\infty$, the Muckenhoupt $A_p$-class consists of weights $w$ with
$$\sup_{Q}\left(\frac{1}{A(Q)}\int_{Q}wdA\right)
\left(\frac{1}{A(Q)}\int_{Q}w^{-\frac{p'}{p}}dA\right)^{\frac{p}{p'}}<\infty,$$
and the Muckenhoupt $A_1$-class consists of weights $w$ with
$$\sup_{Q}\left(\frac{1}{A(Q)}\int_{Q}wdA\right)\left\|w^{-1}\right\|_{L^{\infty}(Q)}<\infty.$$
It is clear that for $1\leq p<\infty$, $A_p\subsetneq A_p^{\res}$. As usual, we write
$$A_{\infty}:=\bigcup_{1\leq p<\infty}A_p$$
and
$$A^{\res}_{\infty}:=\bigcup_{1\leq p<\infty}A^{\res}_p.$$
Recently, the function and operator theory on weighted Fock spaces induced by weights from $A^{\res}_{\infty}$ has attracted some attention; see \cite{CFP23,CFP,Ch24,Ch25-1,Ch24-1,Ch25,CHW,Xu}.

In this paper, we are going to investigate dominating sets and reverse Carleson measures for weighted Fock spaces $F^p_{\alpha,w}$ induced by $A_{\infty}$-type weights. A positive Borel measure $\mu$ on $\C$ is said to be a reverse Carleson measure for $F^p_{\alpha,w}$ if there exists $C>0$ such that for any $f\in F^p_{\alpha,w}$,
$$\|f\|^p_{F^p_{\alpha,w}}\leq C\int_{\C}|f(z)|^pe^{-\frac{p\alpha}{2}|z|^2}d\mu(z),$$
and a measurable set $G\subset\C$ is a dominating set for $F^p_{\alpha,w}$ if $\chi_GwdA$ is a reverse Carleson measure for $F^p_{\alpha,w}$. Here and in the sequel, for a set $E\subset\C$, $\chi_E$ is the characteristic function of $E$. The investigation of dominating sets and reverse Carleson measures for spaces of analytic functions was initiated by Luecking in the setting of Bergman spaces; see \cite{Lu81,Lu84,Lu85,Lu85-1}. We refer to \cite{KR19,THLA23,TLHA21,TS} for recent developments on weighed Bergman spaces. In the setting of Fock spaces, Wang and Zhao \cite{WZ} characterized dominating sets for the Fock space $F^2_{1/2}$ and applied it to determine the invertibility of Toeplitz operators on $F^2_{1/2}$. It was proved in \cite[Theorem 1.2]{WZ} that a measurable set $G\subset\C$ is a dominating set for $F^2_{1/2}$ if and only if there exists $R>0$ and $\delta\in(0,1)$ such that for all $a\in\C$,
$$A\big(G\cap D(a,R)\big)>\delta\pi R^2.$$
Here and in the sequel, $D(a,R)$ denotes the Euclidean disk centered at $a$ with the radius $R$. In this paper, we first generalize the above characterization to weighted Fock spaces $F^p_{\alpha,w}$ induced by $A_{\infty}$-type weights, and then establish a sufficient condition for reverse Carleson measures.

Our first result is the following characterization of dominating sets for weighted Fock spaces $F^p_{\alpha,w}$ induced by Muckenhoupt $A_{\infty}$ weights. For a measurable subset $E\subset\C$, we write $w(E):=\int_EwdA$.

\begin{theorem}\label{main1}
Let $0<p,\alpha<\infty$, $w\in A_{\infty}$, and let $G\subset\C$ be a measurable set. Then $G$ is a dominating set for $F^p_{\alpha,w}$ if and only if there exist $R>0$ and $\delta\in(0,1)$ such that for all $a\in\C$,
$$w\big(G\cap D(a,R)\big)>\delta w\big(D(a,R)\big).$$
\end{theorem}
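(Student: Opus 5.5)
Necessity will be proved by testing on normalized kernels, and sufficiency by a local sub-mean-value estimate combined with the $A_\infty$ property.

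\emph{Necessity.} Suppose $G$ is dominating with constant $C$. For $a\in\C$ set $f_a(z)=K_a(z)e^{-\frac{\alpha}{2}|a|^2}$, so that $|f_a(z)|e^{-\frac{\alpha}{2}|z|^2}=e^{-\frac{\alpha}{2}|z-a|^2}$. Since $w\in A_\infty$ is doubling, $w(D(a,r))$ grows at most polynomially in $r$ relative to $w(D(a,1))$. Hence
$$\|f_a\|^p_{F^p_{\alpha,w}}\asymp w\big(D(a,1)\big),$$
uniformly in $a$, and the tail satisfies
$$\int_{\C\setminus D(a,R)}e^{-\frac{p\alpha}{2}|z-a|^2}w\,dA\le \varepsilon(R)\,w\big(D(a,1)\big),\qquad \varepsilon(R)\to0,$$
uniformly in $a$. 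Applying the reverse inequality to $f_a$ gives
$$c\,w\big(D(a,1)\big)\le C\Big(w\big(G\cap D(a,R)\big)+\varepsilon(R)\,w\big(D(a,1)\big)\Big).$$
Choose $R$ so large that $C\varepsilon(R)<c/2$. Then $w(G\cap D(a,R))\gtrsim w(D(a,1))\gtrsim w(D(a,R))$ by doubling, which yields the condition with some $\delta\in(0,1)$.

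\emph{Sufficiency.} The key tool is the $A_\infty$ property in the form: for every $\eta>0$ there exists $\gamma>0$ such that for any disk $D$ and measurable $E\subset D$,
$$\frac{A(E)}{A(D)}<\gamma\ \Longrightarrow\ \frac{w(E)}{w(D)}<\eta .$$
Taking the contrapositive with $\eta=\delta$, the hypothesis $w(G\cap D(a,R))>\delta w(D(a,R))$ implies $A(G\cap D(a,R))\ge\gamma A(D(a,R))$. So $G$ is relatively dense in Lebesgue measure.

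For $f\in F^p_{\alpha,w}$ put $g_a(z)=f(z)K_a(z)^{-1}e^{\frac{\alpha}{2}|a|^2}$, which is entire on $D(a,2R)$ with $|g_a(z)|\asymp |f(z)|e^{-\frac{\alpha}{2}|z|^2}$ there, uniformly in $a$. The heart of the proof is a local Remez/Logvinenko–Sereda-type estimate of the form
$$\int_{D(a,R)}|g|^p w\,dA\le C\int_{G\cap D(a,R)}|g|^p w\,dA,$$
valid only for \emph{good} disks. Define good disks via a lattice $\{a_j\}$: the disk at $a_j$ is good if
$$\sup_{D(a_j,2R)}|g|^p\le M\cdot\frac{1}{w(D(a_j,R))}\int_{D(a_j,R)}|g|^p w\,dA .$$
The contribution of bad disks can then be made less than half of $\|f\|^p$ by choosing $M$ large, using the pointwise estimate $|f(z)|^pe^{-\frac{p\alpha}{2}|z|^2}\lesssim \frac{1}{w(D(z,1))}\int_{D(z,1)}|f|^pe^{-\frac{p\alpha}{2}|\cdot|^2}w\,dA$, which holds for $A_\infty$ weights by a Hölder/reverse-Hölder argument. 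This is the standard Luecking scheme.

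On a good disk, $\log|g|$ is subharmonic with controlled growth. A Cartan/Remez lemma for analytic functions (as in Luecking or Kovrijkine) then shows that the set
$$E=\Big\{z\in D(a_j,R):\ |g(z)|<\tau\big(\text{average of } |g|\big)\Big\}$$
has Lebesgue measure $<\gamma/2\cdot A(D)$ for small $\tau$. Hence $G\setminus E$ has Lebesgue proportion $\ge\gamma/2$, so its $w$-measure is comparable to $w(D)$ by the doubling/$A_\infty$ property. On $G\setminus E$ we have $|g|\gtrsim\tau\cdot$(average), which gives the local estimate. Summing over good disks completes the proof.

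The main obstacle is handling the weighted average $\frac{1}{w(D)}\int_D|g|^pw\,dA$ versus the Lebesgue average in the Remez step. I would pass through $\sup_D|g|$: the good-disk condition bounds the sup by the weighted average, while the Remez lemma bounds the sup by $|g|$ off a small-measure set.
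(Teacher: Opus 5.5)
Your proof is correct, but the sufficiency half takes a genuinely different route from the paper's. Necessity is the same argument: normalized kernels plus a uniform Gaussian tail bound. You get the tail bound from doubling of $A_\infty$ weights, whereas the paper uses Lemma \ref{w-sq}, which keeps Proposition \ref{necessity} valid for all $w\in A^{\res}_\infty$.

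For sufficiency, the paper never applies a Remez-type inequality to $w$ directly. It first proves Theorem \ref{suff}, which characterizes dominating sets for $F^p_{\alpha,\widehat w}$ for every $w\in A^{\res}_\infty$. This adapts the Wang--Zhao/Luecking argument: the log-sub-mean inequality of Lemma \ref{sub-mean2}, the sets $E_\lambda(a)$ and averages $B_\lambda f$, and the exceptional sets $A_\epsilon$, $B_\epsilon$ controlled via Lemma \ref{lip}. It works because $\widehat w$ is essentially constant on disks of fixed radius. The paper then transfers the result to $w$ by restricting to $\Omega=\{w\gtrsim\widehat w\}$. The hypothesis plus the reverse H\"older inequality shows that $G\cap\Omega$ is $\widehat w$-dense, and since $\widehat w\lesssim w$ on $\Omega$, Theorem \ref{suff} and Lemma \ref{hat-norm} finish.

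You instead stay with $w$ throughout:
\begin{enumerate}
\item the $A_\infty$ property converts $w$-density of $G$ into Lebesgue density, which is where reverse H\"older enters, exactly as in the paper;
\item a Luecking/Kovrijkine good/bad-disk decomposition discards disks with large doubling index, using only the $A^{\res}_\infty$ pointwise estimate;
\item on good disks, a Cartan--Remez lemma plus the $A_p$ inequality $(A(S)/A(D))^p\lesssim w(S)/w(D)$ gives the local reverse inequality.
\end{enumerate}

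Your route is more direct for Theorem \ref{main1} and avoids $\widehat w$ and Lemma \ref{hat-norm}, at the cost of importing an external Cartan/Remez lemma. The paper's route is elementary (subharmonicity and Jensen's inequality). It also yields Theorem \ref{suff} for restricted weights as a by-product, and that is exactly what is reused in Theorem \ref{main2} and the Volterra application.

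Two details to fix when you write yours out. First, define the exceptional set relative to $\sup_{D(a_j,R)}|g|$ rather than an average. Second, the standard Remez lemma needs the supremum controlled on a disk several times larger than $D(a_j,R)$. So either enlarge the outer disk in the good-disk condition (harmless for the bad-disk estimate) or add a short Harnack chaining step.
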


The reverse H\"older property of Muckenhoupt $A_{\infty}$ weights plays an important role in Theorem \ref{main1}. When $w\in A^{\res}_{\infty}$, we can characterize dominating sets for the weighted Fock spaces $F^p_{\alpha,\widehat{w}}$ (see Theorem \ref{suff}), where the weight $\widehat{w}$ is defined by
$$\widehat{w}(z)=\frac{w\big(D(z,1)\big)}{A\big(D(z,1)\big)},\quad z\in\C.$$
This characterization together with the reverse H\"older property of Muckenhoupt $A_{\infty}$ weights leads to the proof of Theorem \ref{main1}.

Our second result establishes a sufficient condition for reverse Carleson measures for weighted Fock spaces $F^p_{\alpha,w}$ with $w\in A^{\res}_{\infty}$. Here, for a positive Borel measure $\mu$ on $\C$, the function $\widehat{\mu}_w$ is defined by
$$ \quad\widehat{\mu}_w(z):=\frac{\mu\big(D(z,1)\big)}{w\big(D(z,1)\big)},\quad z\in\C.$$

\begin{theorem}\label{main2}
Let $0<p,\alpha,R,\epsilon<\infty$, $\delta\in(0,1)$, and let $w\in A^{\res}_{\infty}$. Then there exists an $r_{\ref{main2}}=r_{\ref{main2}}(p,\alpha,R,\epsilon,\delta,w)>0$ such that each $r\in(0,r_{\ref{main2}})$ satisfies the following property: whenever $\mu$ is a positive Borel measure on $\C$ for which $\|\widehat{\mu}_w\|_{L^{\infty}}<\infty$ and for which the set
$$G:=\left\{z\in\C:\frac{\mu\big(D(z,r)\big)}{\widehat{w}\big(D(z,r)\big)}>\epsilon \|\widehat{\mu}_w\|_{L^{\infty}}\right\}$$
satisfies
$$\widehat{w}\big(G\cap D(a,R)\big)>\delta\widehat{w}\big(D(a,R)\big)$$
for all $a\in\C$, $\mu$ is a reverse Carleson measure for $F^p_{\alpha,w}$.
\end{theorem}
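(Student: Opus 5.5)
The argument goes in two steps: pass from $\mu$ to the weighted area measure $\chi_G\widehat{w}\,dA$, then use the dominating-set characterization for $F^p_{\alpha,\widehat{w}}$ (Theorem \ref{suff}) together with $F^p_{\alpha,w}=F^p_{\alpha,\widehat{w}}$.

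\emph{Step 1: comparison of the spaces.} For $w\in A^{\res}_\infty$ and $f$ entire, I would first record that
$$\int_{\C}|f|^pe^{-\frac{p\alpha}{2}|z|^2}w\,dA\asymp\int_{\C}|f|^pe^{-\frac{p\alpha}{2}|z|^2}\widehat{w}\,dA.$$
The ingredients are the doubling of $w$ at a fixed scale and the local subharmonic estimate for $|f|^pe^{-\frac{p\alpha}{2}|\cdot|^2}$, with constants that survive the weight. This is presumably how Theorem \ref{suff} is used anyway. Combined with Theorem \ref{suff}, the hypothesis $\widehat{w}(G\cap D(a,R))>\delta\,\widehat{w}(D(a,R))$ then gives
$$\|f\|^p_{F^p_{\alpha,w}}\le C_1\int_G|f|^pe^{-\frac{p\alpha}{2}|z|^2}\widehat{w}\,dA,$$
where $C_1$ depends only on $p,\alpha,R,\delta,w$. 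This dependence is important, because $r$ will be chosen in terms of $C_1$.

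\emph{Step 2: transferring from $G$ to $\mu$.} Put $F(z)=|f(z)|^pe^{-\frac{p\alpha}{2}|z|^2}$. For $z\in G$ we have $\widehat{w}(D(z,r))<\mu(D(z,r))/(\epsilon\|\widehat{\mu}_w\|_\infty)$, and $\widehat{w}(D(z,r))\asymp r^2\widehat{w}(z)$ because $\widehat{w}$ is essentially constant on unit disks. I would then compare $F(z)$ with values of $F$ on $D(z,r)$ via
$$F(z)\le F(\xi)+\bigl|F(z)-F(\xi)\bigr|,\qquad \bigl|F(z)-F(\xi)\bigr|\lesssim r\sup_{D(z,1)}F\lesssim r\int_{D(z,2)}F\,dA,$$
with $\xi\in D(z,r)$ and the last bound coming from a gradient estimate. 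Averaging against $d\mu(\xi)$ over $D(z,r)$ gives
$$F(z)\,\widehat{w}(z)\lesssim \frac{1}{\epsilon\|\widehat{\mu}_w\|_\infty r^2}\int_{D(z,r)}F\,d\mu+r\,\widehat{w}(z)\int_{D(z,2)}F\,dA.$$
Integrate over $z\in G$ and apply Fubini to the first term: $\xi\in D(z,r)$ means $z\in D(\xi,r)$, and $A(D(\xi,r))\asymp r^2$, so the $r^2$ cancels. The first term therefore becomes $\lesssim (\epsilon\|\widehat{\mu}_w\|_\infty)^{-1}\int F\,d\mu$. The second term, using Step 1 and the local comparability of $\widehat{w}$, is $\lesssim r\|f\|^p_{F^p_{\alpha,w}}$.

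\emph{Absorption and the main obstacle.} Putting the pieces together yields
$$\|f\|^p\le C_1C_2\,(\epsilon\|\widehat{\mu}_w\|_\infty)^{-1}\int F\,d\mu+C_1C_3\,r\,\|f\|^p.$$
Choosing $r_{\ref{main2}}$ with $C_1C_3r_{\ref{main2}}<1/2$ lets the last term be absorbed whenever $\|f\|<\infty$, and the claim follows. The main obstacle I expect is controlling the oscillation of $F$ by $r$ times a local integral uniformly in $z$. The weight $e^{-\frac{p\alpha}{2}|z|^2}$ requires working with $f(\xi)e^{-\alpha\bar z\xi}$-type translates, and for $p<1$ one needs the subharmonicity of $|g|^p$ applied to a translate $g$ on a unit disk rather than a Cauchy estimate for $f$ directly. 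The natural tool is the Fock translation $f\mapsto f(\cdot+z)e^{-\alpha\bar z\cdot-\frac{\alpha}{2}|z|^2}$, which reduces the estimate to the case $z=0$ with uniform constants.
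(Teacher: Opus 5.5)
Your proposal is correct, but it passes from $\chi_G\widehat{w}\,dA$ to $\mu$ by a different mechanism than the paper.

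The paper works globally. It takes the double integral of Lemma \ref{difference} (kernel $\chi_{D(z,r)}(\xi)/\widehat{w}(D(\xi,r))$ against $\widehat{w}(z)\,dA(z)\,d\mu(\xi)$) and bounds it above by $C\|\widehat{\mu}_w\|_{L^\infty}r^p\|f\|^p$; this is where the upper bound on $\mu$ is genuinely used. It bounds the same integral below, via Minkowski's inequality (or $|a-b|^p\ge|a|^p-|b|^p$ if $p<1$), by the difference of the $p$-th roots of $\int_{\C}F\widehat{w}\,\mu(D(z,r))/\widehat{w}(D(z,r))\,dA$ and $\int F\,d\mu$. Only at the end does it restrict to $G$. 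Since the error carries $\|\widehat{\mu}_w\|_{L^\infty}$ and the main term carries $\epsilon\|\widehat{\mu}_w\|_{L^\infty}$, the paper's radius must satisfy $r^p\lesssim\epsilon$.

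You instead argue pointwise on $G$ and divide by $\mu(D(z,r))$ before using the lower bound that defines $G$. Your error term $r\,\widehat{w}(z)\int_{D(z,2)}F\,dA$ is therefore independent of $\mu$. This gives a slightly stronger statement. Your $r_{\ref{main2}}$ depends only on $p,\alpha,R,\delta,w$, not on $\epsilon$. Moreover, $\|\widehat{\mu}_w\|_{L^\infty}$ acts only as a normalization: any threshold $\eta>0$ in place of $\epsilon\|\widehat{\mu}_w\|_{L^\infty}$ gives $\|f\|^p\lesssim\eta^{-1}\int F\,d\mu$, provided $\mu$ is locally finite. What the paper's route buys is that it differences $|f|e^{-\frac{\alpha}{2}|\cdot|^2}$ and only then takes $p$-th powers, so it never needs a Lipschitz bound for $F$ itself.

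Two details need repair.

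\emph{The case $0<p<1$.} Your bound $|F(z)-F(\xi)|\lesssim r\sup_{D(z,1)}F$ is false here. With $f(u)=u$, $z=0$ and $|\xi|=r$, one gets $F(\xi)-F(0)\approx r^p\gg r$. The reason is that $|f|^p$ is only H\"older continuous near zeros of $f$, and subharmonicity of $|g|^p$ applied to a translate does not change that. Instead, use $|a^p-b^p|\le|a-b|^p$ together with Lemma \ref{lip} to get the factor $r^p$. You only need the error coefficient to tend to $0$, so the absorption still works with $r^{\min\{1,p\}}$.

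\emph{Step 1.} Fixed-scale doubling plus subharmonicity only gives $\int Fw\,dA\lesssim\int F\widehat{w}\,dA$. You also need the reverse inequality, for two reasons: to bound your error term by $\|f\|^p_{F^p_{\alpha,w}}$, and to know that $f\in F^p_{\alpha,\widehat{w}}$, so that Theorem \ref{suff} applies and the absorbed term is finite. That reverse inequality requires the $A^{\res}_\infty$ condition itself, so simply cite Lemma \ref{hat-norm}.
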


The proof of Theorem \ref{main2} is based on the characterization of dominating sets for the spaces $F^p_{\alpha,\widehat{w}}$ and a difference quotient estimate for functions in weighted Fock spaces.

The rest part of the paper is organized as follows. In Section \ref{pre}, some preliminary results are given. Theorems \ref{main1} and \ref{main2} are respectively proved in Sections \ref{proof1} and \ref{proof2}. We finally give some applications of our main results in Section \ref{app}, including a characterization of invertible Toeplitz operators on $F^2_{\alpha,w}$, a characterization of closed range Volterra operators on $F^p_{\alpha,w}$, and an atomic decomposition for $F^p_{\alpha,w}$.

\section{Preliminaries}\label{pre}

In this section, we give some preliminary results that will be used in the sequel. The following sub-mean inequality can be found in \cite[Lemma 2.32]{Zh}.

\begin{lemma}\label{sub-mean1}
Let $0<p,\alpha,r<\infty$. Then for any $f\in\h(\C)$ and $a\in\C$,
$$|f(a)|^pe^{-\frac{p\alpha}{2}|a|^2}\leq\frac{p\alpha}{2\pi\left(1-e^{-{p\alpha r^2}/{2}}\right)}\int_{D(a,r)}|f(z)|^pe^{-\frac{p\alpha}{2}|z|^2}dA(z).$$
\end{lemma}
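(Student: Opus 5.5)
The approach is to reduce to subharmonicity of $|g|^p$ for a suitable entire function $g$ that absorbs the Gaussian factor. Fix $a\in\C$ and set
$$g(z):=f(z)e^{-\alpha\overline{a}z+\frac{\alpha}{2}|a|^2}.$$
This is entire. For every $z$,
$$|g(z)|^p=|f(z)|^pe^{-\frac{p\alpha}{2}|z|^2}e^{\frac{p\alpha}{2}|z-a|^2},$$
because $|z|^2-2\mathrm{Re}(\overline{a}z)+|a|^2=|z-a|^2$. At $z=a$ we get $|g(a)|^p=|f(a)|^pe^{-\frac{p\alpha}{2}|a|^2}$, which is exactly the left-hand side of the inequality.

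Since $g$ is entire, $|g|^p$ is subharmonic for every $0<p<\infty$; the function $\log|g|$ is subharmonic and $t\mapsto e^{pt}$ is convex and increasing. Hence for each $0<s<r$,
$$|g(a)|^p\le\frac{1}{2\pi}\int_0^{2\pi}|g(a+se^{i\theta})|^p\,d\theta.$$
On the circle $|z-a|=s$ the identity above gives $|g(z)|^p=|f(z)|^pe^{-\frac{p\alpha}{2}|z|^2}e^{\frac{p\alpha}{2}s^2}$. Multiply the mean-value inequality by the radial weight $s\,e^{-\frac{p\alpha}{2}s^2}$ and integrate over $0<s<r$. The exponentials cancel on the right, and we obtain
$$|g(a)|^p\int_0^r se^{-\frac{p\alpha}{2}s^2}ds\le\frac{1}{2\pi}\int_{D(a,r)}|f(z)|^pe^{-\frac{p\alpha}{2}|z|^2}dA(z).$$

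The remaining step is the computation
$$\int_0^r se^{-\frac{p\alpha}{2}s^2}ds=\frac{1-e^{-p\alpha r^2/2}}{p\alpha}.$$
Dividing through by this quantity yields precisely the stated constant $\frac{p\alpha}{2\pi(1-e^{-p\alpha r^2/2})}$.

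There is no genuine obstacle in this argument. The one point that needs care is choosing the weight $se^{-\frac{p\alpha}{2}s^2}$ rather than plain area measure. That choice makes the factor $e^{\frac{p\alpha}{2}|z-a|^2}$ cancel exactly, which produces the sharp constant instead of a cruder bound.
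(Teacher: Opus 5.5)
Your proof is correct and is essentially the standard argument behind the citation. The paper does not prove this lemma itself; it quotes Lemma 2.32 of Zhu's book, whose proof uses subharmonicity on circles about the origin, integration against $t e^{-p\alpha t^2/2}\,dt$ over $(0,r)$ to obtain exactly the constant $\frac{p\alpha}{2\pi(1-e^{-p\alpha r^2/2})}$, and a move of the center to $a$ via the kernel-twisted translate $F(w)=f(a+w)e^{-\alpha\overline{a}w-\frac{\alpha}{2}|a|^2}$. Your $g$ is just $g(z)=F(z-a)$, so you have merged those two steps into one, and your sharpness remark holds because equality occurs for $f=K_a$.
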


The following Lipschitz type estimate for entire functions can be found in \cite[Lemma 4.6]{Zh}.

\begin{lemma}\label{lip}
Let $0<p,\alpha,R<\infty$. Then there exists $C_{\ref{lip}}=C_{\ref{lip}}(p,\alpha,R)>0$ such that for any $f\in\h(\C)$ and any $a,z\in\C$ with $|a-z|<R/3$,
$$\left||f(z)|e^{-\frac{\alpha}{2}|z|^2}-|f(a)|e^{-\frac{\alpha}{2}|a|^2}\right|^p
\leq C_{\ref{lip}}|z-a|^p\int_{D(a,R)}|f(u)|^pe^{-\frac{p\alpha}{2}|u|^2}dA(u).$$
\end{lemma}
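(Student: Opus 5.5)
Set $g(u):=f(u)e^{-\frac{\alpha}{2}(|u|^2-2\overline{a}u+|a|^2)}$, in the spirit of \cite[Lemma 4.6]{Zh}, and reduce to a Lipschitz estimate for the holomorphic function $h(u):=f(u)e^{\alpha\overline{a}u-\frac{\alpha}{2}|a|^2}$. Observe that
$$|h(u)|=|f(u)|e^{-\frac{\alpha}{2}|u|^2}e^{\frac{\alpha}{2}|u-a|^2}.$$
Hence $|h(a)|=|f(a)|e^{-\frac{\alpha}{2}|a|^2}$, and on $D(a,R)$ the factor $e^{\frac{\alpha}{2}|u-a|^2}$ lies between $1$ and $e^{\alpha R^2/2}$. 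Write $\phi(u):=|f(u)|e^{-\frac{\alpha}{2}|u|^2}=|h(u)|e^{-\frac{\alpha}{2}|u-a|^2}$. For $|z-a|<R/3$ I split
$$|\phi(z)-\phi(a)|\le \big||h(z)|-|h(a)|\big|e^{-\frac{\alpha}{2}|z-a|^2}+|h(a)|\big(1-e^{-\frac{\alpha}{2}|z-a|^2}\big).$$
The second term is at most $\frac{\alpha}{2}|z-a|^2|h(a)|\le \frac{\alpha R}{6}|z-a|\,|h(a)|$. The first term is at most $|h(z)-h(a)|$.

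For the first term, I bound $|h(z)-h(a)|\le |z-a|\sup_{D(a,R/3)}|h'|$. By the Cauchy estimate on disks of radius $R/3$ centred at points of $D(a,R/3)$ (contained in $D(a,2R/3)$), this supremum is at most $\frac{3}{R}\sup_{D(a,2R/3)}|h|$. For $\xi\in D(a,2R/3)$, the plain subharmonic mean value inequality for $|h|^p$ on $D(\xi,R/3)\subset D(a,R)$ gives
$$|h(\xi)|^p\le\frac{9}{\pi R^2}\int_{D(a,R)}|h(u)|^p\,dA(u)\le \frac{9e^{p\alpha R^2/2}}{\pi R^2}\int_{D(a,R)}|f(u)|^pe^{-\frac{p\alpha}{2}|u|^2}dA(u).$$
This holds for every $0<p<\infty$, since $|h|^p$ is subharmonic. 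The same bound applies to $|h(a)|^p$, which handles the second term too.

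Combining the two terms with $(x+y)^p\le 2^p(x^p+y^p)$ gives the stated inequality with $C_{\ref{lip}}$ depending only on $p,\alpha,R$. The only mild obstacle is remembering to remove the Gaussian factor via the holomorphic modification $h$ before applying the Cauchy estimates, since $\phi$ itself is not holomorphic. After that, all constants are explicit and uniform in $a$.
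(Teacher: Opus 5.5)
There is one concrete error, and it is in the step you yourself flagged as the key one: the sign in the exponent of $h$. With $h(u)=f(u)e^{\alpha\overline{a}u-\frac{\alpha}{2}|a|^2}$ you get $|h(a)|=|f(a)|e^{\frac{\alpha}{2}|a|^2}$, not $|f(a)|e^{-\frac{\alpha}{2}|a|^2}$. More generally, $|h(u)|=|f(u)|e^{-\frac{\alpha}{2}|u|^2}e^{\frac{\alpha}{2}|u+a|^2-\alpha|a|^2}$. So on $D(a,R)$ the factor comparing $|h|$ with $|f|e^{-\frac{\alpha}{2}|\cdot|^2}$ grows like $e^{\alpha|a|^2}$, and none of your constants would be uniform in $a$. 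The correct modification is
$$h(u):=f(u)e^{-\alpha\overline{a}u+\frac{\alpha}{2}|a|^2},$$
for which $|h(u)|=|f(u)|e^{-\frac{\alpha}{2}|u|^2}e^{\frac{\alpha}{2}|u-a|^2}$ does hold, since $|u-a|^2-|u|^2=|a|^2-2\,\mathrm{Re}(\overline{a}u)$. You should also delete the auxiliary $g$ in your first line. It is not holomorphic, its exponent is not $|u-a|^2$ (that would need $\mathrm{Re}(\overline{a}u)$ in place of $\overline{a}u$), and it is never used.

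With this correction, the rest of your argument goes through as written:
\begin{itemize}
\item the splitting of $\phi(z)-\phi(a)$ and the bound $1-e^{-x}\le x$;
\item the Cauchy estimate on circles of radius $R/3$ centred in $D(a,R/3)$, which lie in $D(a,2R/3)$;
\item the mean-value inequality for the subharmonic function $|h|^p$ on $D(\xi,R/3)\subset D(a,R)$, valid for every $p>0$.
\end{itemize}
Both terms are bounded by $|z-a|$ times the same quantity $\sup_{D(a,2R/3)}|h|$. So you do not need the $2^p$ step, and you get the explicit constant
$$C_{\ref{lip}}=\left(\frac{3}{R}+\frac{\alpha R}{6}\right)^p\frac{9e^{p\alpha R^2/2}}{\pi R^2}.$$
The paper does not prove this lemma; it only quotes \cite[Lemma 4.6]{Zh}. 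So there is no in-paper route to compare against: once the sign is fixed, your proposal is a complete, self-contained proof of the cited estimate.
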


We now recall some estimates involving $A_{\infty}$-type weights. The following lemma was proved in \cite[Lemma 3.4]{Is}. Here and in the sequel, for each $r>0$, we treat $r\Z^2$ as a subset of $\C$ in the natural way. For $a\in\C$ and $r>0$, $Q_r(a)$ denotes the square centered at $a$ with side length $\ell(Q)=r$.

\begin{lemma}\label{w-sq}
Let $w\in A^{\res}_{\infty}$ and $r>0$. Then there exists $C_{\ref{w-sq}}=C_{\ref{w-sq}}(w,r)>0$ such that for any $\nu,\nu'\in r\mathbb{Z}^2$,
$$\frac{w\big(Q_r(\nu)\big)}{w\big(Q_r(\nu')\big)}\leq C_{\ref{w-sq}}^{|\nu-\nu'|}.$$
\end{lemma}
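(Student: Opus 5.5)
The plan is to reduce to adjacent lattice squares and then chain along a lattice path. Fix $w\in A^{\res}_p$ for some $1\le p<\infty$. Call $\nu,\nu'\in r\Z^2$ adjacent if $|\nu-\nu'|=r$.

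\textbf{Step 1: adjacent squares.} We show there is $C_0=C_0(w,r)\ge 1$ with $w(Q_r(\nu))\le C_0\,w(Q_r(\nu'))$ whenever $\nu,\nu'$ are adjacent. Let $Q=Q_{2r}(c)$ be a square of side $2r$ containing both $Q_r(\nu)$ and $Q_r(\nu')$; for instance, take $c$ to be the midpoint of $\nu$ and $\nu'$. The $A^{\res}_p$ condition may be used at side length $2r$, since the definition allows any fixed side length.

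First suppose $p>1$. For any measurable $E\subset Q$, H\"older's inequality gives
$$A(E)=\int_E w^{1/p}w^{-1/p}\,dA\le w(E)^{1/p}\Big(\int_Q w^{-p'/p}\,dA\Big)^{1/p'}.$$
Combining this with the $A^{\res}_p$ bound on $Q$ yields
$$\Big(\frac{A(E)}{A(Q)}\Big)^p\le [w]\,\frac{w(E)}{w(Q)}.$$
When $p=1$, the same inequality holds directly from $A(E)\,\|w^{-1}\|^{-1}_{L^\infty(Q)}\le w(E)$.

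Now apply this with $E=Q_r(\nu')$, so that $A(E)/A(Q)=1/4$. This gives $w(Q)\le 4^p[w]\,w(Q_r(\nu'))$. Since $w(Q_r(\nu))\le w(Q)$, we conclude $w(Q_r(\nu))\le 4^p[w]\,w(Q_r(\nu'))$.

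There is one detail to check: that $w(Q_r(\nu'))>0$. This follows from the $A_p^{\res}$ condition itself. For $p>1$ the finiteness of $\int_Q w^{-p'/p}$ forces $w>0$ a.e., and for $p=1$ the finiteness of $\|w^{-1}\|_{L^\infty(Q)}$ does the same. The quantities are finite because $w$ is locally integrable.

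\textbf{Step 2: chaining.} Given $\nu,\nu'\in r\Z^2$, connect them by a monotone lattice path of adjacent points. Such a path has $N=|\nu-\nu'|_1/r$ steps, and $N\le \sqrt2\,|\nu-\nu'|/r$. Iterating Step 1 along the path gives
$$\frac{w(Q_r(\nu))}{w(Q_r(\nu'))}\le C_0^{N}\le \big(C_0^{\sqrt2/r}\big)^{|\nu-\nu'|}.$$
So we may take $C_{\ref{w-sq}}:=C_0^{\sqrt2/r}$, which is at least $1$. This bound also covers the case $\nu=\nu'$.

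The main obstacle is Step 1, which is the only place where the $A^{\res}_p$ condition enters. It requires the doubling-type inequality $\big(A(E)/A(Q)\big)^p\lesssim w(E)/w(Q)$, and this must hold only for squares of a fixed size, since restricted weights give no control at other scales. Once that inequality is in hand, Step 2 is routine bookkeeping.
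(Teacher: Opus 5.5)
Your argument is correct and complete. The H\"older estimate $\big(A(E)/A(Q)\big)^p\le [w]\,w(E)/w(Q)$ on a square of side $2r$ makes adjacent lattice squares comparable, and chaining along a monotone lattice path with at most $\sqrt2\,|\nu-\nu'|/r$ steps gives the bound with $C=C_0^{\sqrt2/r}$. The paper does not prove this lemma itself but quotes it from Isralowitz (Lemma 3.4 there); I expect that proof follows this same fixed-scale doubling and chaining route, since it is the standard argument.
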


Based on the above lemma, one can directly obtain the following estimate; see \cite[Remark 2.3]{CFP}.

\begin{lemma}\label{w-esti}
Let $w\in A^{\res}_{\infty}$ and $R>0$. Then there exists $C_{\ref{w-esti}}=C_{\ref{w-esti}}(R,w)\geq1$ such that for any $a,z\in\C$ with $|a-z|<R$,
$$C_{\ref{w-esti}}^{-1}w\big(D(a,1)\big)\leq w\big(D(z,R)\big)\leq C_{\ref{w-esti}}w\big(D(a,1)\big).$$
\end{lemma}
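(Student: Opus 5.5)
Lemma~\ref{w-esti} follows from Lemma~\ref{w-sq} by covering disks with finitely many lattice squares of a fixed side length. We take $r=1$ in Lemma~\ref{w-sq}, so the relevant squares are $Q_1(\nu)$ with $\nu\in\mathbb{Z}^2$. These squares tile $\C$ up to boundaries of measure zero, so $w$ of any set is bounded above by the sum of $w$ over the squares meeting that set.

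For the upper bound, fix $a,z$ with $|a-z|<R$. The disk $D(z,R)$ lies inside $D(a,2R)$, and $D(a,2R)$ meets at most $N=N(R)$ unit lattice squares, say $Q_1(\nu_1),\dots,Q_1(\nu_N)$, with every $\nu_j$ satisfying $|\nu_j-a|\le 2R+\sqrt2$. Next we choose a lattice point $\nu_0$ with $Q_1(\nu_0)\subset D(a,1)$. No such point need exist, because a unit square has diameter $\sqrt2>1$. We therefore run the same argument with the finer lattice $\tfrac14\Z^2$ and the constant $C_{\ref{w-sq}}(w,1/4)$. Every $a$ lies in some closed square $\overline{Q_{1/4}(\nu_0)}$, and that square has diameter $\sqrt2/4<1/2$, so $Q_{1/4}(\nu_0)\subset D(a,1)$. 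The disk $D(a,2R)$ meets at most $N(R)$ squares $Q_{1/4}(\nu_j)$, and each satisfies $|\nu_j-\nu_0|\le 2R+1$. Lemma~\ref{w-sq} then gives $w(Q_{1/4}(\nu_j))\le C_{\ref{w-sq}}^{2R+1}w(Q_{1/4}(\nu_0))$, provided $C_{\ref{w-sq}}\ge1$, which we may assume. Summing over $j$,
$$w\big(D(z,R)\big)\le N\,C_{\ref{w-sq}}^{2R+1}\,w\big(Q_{1/4}(\nu_0)\big)\le N\,C_{\ref{w-sq}}^{2R+1}\,w\big(D(a,1)\big).$$

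For the lower bound, we need a square contained in $D(z,R)$, not merely in $D(z,1)$. If $R\ge1/2$, any lattice square of side $1/4$ whose closure contains $z$ lies in $D(z,R)$. If $R$ is small, we instead use a lattice of side $s=s(R)$ with $\sqrt2\,s<R$, and the argument is identical. So take $\nu_1\in s\Z^2$ with $Q_s(\nu_1)\subset D(z,R)$. The disk $D(a,1)$ is covered by at most $M(R)$ squares $Q_s(\nu)$, each with $|\nu-\nu_1|\le R+1+\sqrt2\,s$. Another application of Lemma~\ref{w-sq} gives
$$w\big(D(a,1)\big)\le M\,C_{\ref{w-sq}}(w,s)^{R+2}\,w\big(Q_s(\nu_1)\big)\le M\,C_{\ref{w-sq}}(w,s)^{R+2}\,w\big(D(z,R)\big).$$
It is simplest to use the single lattice $s\Z^2$, with $s=\min\{1/4,R/2\}$, in both directions. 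Then $C_{\ref{w-esti}}$ is the larger of the two constants obtained, and at least $1$.

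The only point that needs care is the bookkeeping that makes every constant depend only on $R$ and $w$. Specifically, we must fix the lattice scale $s$ in terms of $R$ alone and check that every pair of lattice points involved is at distance $O(R+1)$. The arguments $a$, $z$ and the lattice points then drop out of the estimates, so the constants are uniform in $a$ and $z$.
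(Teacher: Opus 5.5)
Your proof is correct and follows essentially the route the paper intends: the paper gives no separate argument and only remarks that the estimate follows directly from Lemma~\ref{w-sq}. Your covering by lattice squares of side $s=\min\{1/4,R/2\}$ supplies exactly those details. You place one square inside $D(a,1)$ (resp.\ $D(z,R)$), keep all relevant centers within distance $O(R+1)$ of each other, and you correctly notice and fix the one delicate point, namely that unit squares need not fit inside $D(a,1)$.
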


Recall that for $a\in\C$, the reproducing kernel $K_a$ is given by $K_a(z)=e^{\alpha\overline{a}z}$. The following norm estimate was proved in \cite[Proposition 4.1]{CFP}.

\begin{lemma}\label{test}
Let $0<p,\alpha<\infty$ and $w\in A^{\res}_{\infty}$. Then there exists $C_{\ref{test}}=C_{\ref{test}}(p,\alpha,w)\geq1$ such that for any $a\in\C$,
$$C_{\ref{test}}^{-1}e^{\frac{\alpha}{2}|a|^2}w\big(D(a,1)\big)^{1/p}\leq\|K_a\|_{F^p_{\alpha,w}}
\leq C_{\ref{test}}e^{\frac{\alpha}{2}|a|^2}w\big(D(a,1)\big)^{1/p}.$$
\end{lemma}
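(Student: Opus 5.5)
The proof rests on an explicit formula for the weighted modulus of the kernel, followed by a lattice decomposition of $\C$ that compares the weight on far-away squares with the weight near $a$. The starting identity is
$$|K_a(z)|e^{-\frac{\alpha}{2}|z|^2}=e^{\alpha\mathrm{Re}(\overline{a}z)-\frac{\alpha}{2}|z|^2}=e^{\frac{\alpha}{2}|a|^2}e^{-\frac{\alpha}{2}|z-a|^2},$$
which gives
$$\|K_a\|^p_{F^p_{\alpha,w}}=e^{\frac{p\alpha}{2}|a|^2}\int_{\C}e^{-\frac{p\alpha}{2}|z-a|^2}w(z)\,dA(z).$$
It therefore suffices to show that the integral $I(a):=\int_{\C}e^{-\frac{p\alpha}{2}|z-a|^2}w(z)\,dA(z)$ is comparable to $w\big(D(a,1)\big)$, with constants depending only on $p,\alpha,w$.

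\emph{Lower bound.} Restricting the integral to $D(a,1)$, where $e^{-\frac{p\alpha}{2}|z-a|^2}\geq e^{-\frac{p\alpha}{2}}$, gives $I(a)\geq e^{-\frac{p\alpha}{2}}w\big(D(a,1)\big)$.

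\emph{Upper bound.} Cover $\C$ by the squares $Q_1(\nu)$, $\nu\in\Z^2$, which overlap only on boundaries. On $Q_1(\nu)$ we have $|z-a|\geq|\nu-a|-\tfrac{\sqrt2}{2}$. Let $\nu_a\in\Z^2$ be a lattice point with $|\nu_a-a|\leq\tfrac{\sqrt2}{2}$. Then $Q_1(\nu_a)\subset D(a,2)$, and Lemma \ref{w-esti} with $R=2$ (taking $z=a$) gives $w\big(Q_1(\nu_a)\big)\leq C_{\ref{w-esti}}w\big(D(a,1)\big)$. Lemma \ref{w-sq} with $r=1$ gives
$$w\big(Q_1(\nu)\big)\leq C_{\ref{w-sq}}^{|\nu-\nu_a|}w\big(Q_1(\nu_a)\big)\leq C_{\ref{w-sq}}^{|\nu-a|+1}C_{\ref{w-esti}}\,w\big(D(a,1)\big).$$
Combining these estimates,
$$I(a)\leq C_{\ref{w-esti}}C_{\ref{w-sq}}\,w\big(D(a,1)\big)\sum_{\nu\in\Z^2}\exp\Big(-\tfrac{p\alpha}{2}\big(\max\{|\nu-a|-\tfrac{\sqrt2}{2},0\}\big)^2\Big)C_{\ref{w-sq}}^{|\nu-a|}.$$

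The only step needing care is that this lattice sum is bounded uniformly in $a$. The Gaussian factor beats the exponential growth $C_{\ref{w-sq}}^{|\nu-a|}$, so the summand is at most $C'e^{-|\nu-a|}$ for a constant $C'=C'(p,\alpha,w)$. The number of $\nu\in\Z^2$ with $k\leq|\nu-a|<k+1$ is $O(k+1)$ uniformly in $a$. Hence the sum is bounded by $\sum_k O(k+1)e^{-k}<\infty$, independently of $a$.

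Taking $p$-th roots of both bounds then yields the claimed two-sided estimate for $\|K_a\|_{F^p_{\alpha,w}}$ with some $C_{\ref{test}}=C_{\ref{test}}(p,\alpha,w)\geq1$.
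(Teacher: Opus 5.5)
Your proof is correct: the identity $|K_a(z)|e^{-\frac{\alpha}{2}|z|^2}=e^{\frac{\alpha}{2}|a|^2}e^{-\frac{\alpha}{2}|z-a|^2}$ reduces the lemma to comparing $\int_{\C}e^{-\frac{p\alpha}{2}|z-a|^2}w\,dA$ with $w\big(D(a,1)\big)$. The paper itself only cites this lemma from \cite[Proposition 4.1]{CFP}, but your unit-lattice decomposition controlled by Lemma \ref{w-sq}, with the Gaussian beating the exponential growth and a uniform count of lattice points, is essentially the argument the paper gives in the proof of Lemma \ref{limit0}; your anchoring at a genuine lattice point $\nu_a$ followed by Lemma \ref{w-esti} is, if anything, slightly more careful than the translated-lattice step used there.
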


The following lemma establishes an equivalent norm for the spaces $F^p_{\alpha,w}$ via the weight $\widehat{w}$; see \cite[Lemma 3.2]{CHW}.

\begin{lemma}\label{hat-norm}
Let $0<p,\alpha<\infty$ and $w\in A^{\res}_{\infty}$. Then $F^p_{\alpha,w}=F^p_{\alpha,\widehat{w}}$ with equivalent norms.
\end{lemma}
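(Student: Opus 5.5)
The statement to prove is Lemma \ref{hat-norm}: $F^p_{\alpha,w}=F^p_{\alpha,\widehat{w}}$ with equivalent norms. The plan is to compare both norms with a discretized quantity built from local suprema of $|f|e^{-\frac{\alpha}{2}|\cdot|^2}$ on a lattice of squares. Fix a small $r>0$ and cover $\C$ by the squares $Q_r(\nu)$, $\nu\in r\Z^2$. By Lemma \ref{w-esti}, for $z\in Q_r(\nu)$ we have $\widehat{w}(z)\asymp w(D(\nu,1))$, with constants depending only on $w$. Since $w(Q_r(\nu))\le w(D(\nu,1))$, Lemma \ref{w-sq} combined with a covering of $D(\nu,1)$ by boundedly many squares of side $r$ should also give $w(D(\nu,1))\lesssim w(Q_r(\nu))$. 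Hence
$$\widehat{w}\big(Q_r(\nu)\big)\asymp w\big(D(\nu,1)\big)\asymp w\big(Q_r(\nu)\big).$$

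\emph{Upper bound of $\|f\|_{F^p_{\alpha,w}}$.} On each square, bound the integrand by its supremum:
$$\int_{Q_r(\nu)}|f|^pe^{-\frac{p\alpha}{2}|z|^2}w\,dA\le w\big(Q_r(\nu)\big)\sup_{Q_r(\nu)}|f|^pe^{-\frac{p\alpha}{2}|z|^2}.$$
Lemma \ref{sub-mean1} bounds this supremum by $C\int_{D(\nu,2)}|f|^pe^{-\frac{p\alpha}{2}|u|^2}dA(u)$. On $D(\nu,2)$ we have $\widehat{w}\asymp w(D(\nu,1))$, so the integral is at most a constant times $w(D(\nu,1))^{-1}\int_{D(\nu,2)}|f|^pe^{-\frac{p\alpha}{2}|u|^2}\widehat{w}\,dA$. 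Multiplying by $w(Q_r(\nu))\lesssim w(D(\nu,1))$ and summing over $\nu$, with bounded overlap of the disks $D(\nu,2)$, gives $\|f\|_{F^p_{\alpha,w}}\lesssim\|f\|_{F^p_{\alpha,\widehat{w}}}$.

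\emph{Reverse bound.} This is the main obstacle: $w$ may vanish or be tiny on parts of $Q_r(\nu)$, so the weighted integral over a square cannot simply be compared with a supremum. My first attempt would use Fubini, writing
$$\int|f|^pe^{-\frac{p\alpha}{2}|z|^2}\widehat{w}(z)\,dA(z)=\frac{1}{\pi}\int w(u)\int_{D(u,1)}|f(z)|^pe^{-\frac{p\alpha}{2}|z|^2}dA(z)\,dA(u),$$
and then bound the inner integral by $\pi\sup_{D(u,1)}|f|^pe^{-\frac{p\alpha}{2}|\cdot|^2}$. That leaves the need to control the supremum over $D(u,1)$ by $w$-averages near $u$, which again requires a reverse sub-mean inequality.

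To handle this, I would use the restricted $A_q$ condition ($w\in A^{\res}_q$ for some $q$) together with Hölder's inequality. For $a\in\C$, apply Lemma \ref{sub-mean1} with a small exponent $s=p/q$ (when $q>1$) to $|f|^s$:
$$|f(a)|^se^{-\frac{s\alpha}{2}|a|^2}\lesssim\int_{D(a,1)}|f|^se^{-\frac{s\alpha}{2}|z|^2}w^{s/p}w^{-s/p}\,dA.$$
Hölder with exponents $q$ and $q'$ then yields
$$|f(a)|^pe^{-\frac{p\alpha}{2}|a|^2}\lesssim\left(\int_{D(a,1)}|f|^pe^{-\frac{p\alpha}{2}|z|^2}w\,dA\right)\left(\int_{D(a,1)}w^{-q'/q}dA\right)^{q/q'}.$$
The restricted $A_q$ condition (disks compared with squares of fixed size) makes the last factor $\lesssim w(D(a,1))^{-1}$; the case $q=1$ is similar, using $\|w^{-1}\|_{L^\infty}$ instead. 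Hence
$$\sup_{D(\nu,r)}|f|^pe^{-\frac{p\alpha}{2}|\cdot|^2}\,w\big(D(\nu,1)\big)\lesssim\int_{D(\nu,2)}|f|^pe^{-\frac{p\alpha}{2}|z|^2}w\,dA.$$
Integrating against $\widehat{w}\asymp w(D(\nu,1))$ over each $Q_r(\nu)$ and summing, again with bounded overlap, gives $\|f\|_{F^p_{\alpha,\widehat{w}}}\lesssim\|f\|_{F^p_{\alpha,w}}$, which completes the norm equivalence.
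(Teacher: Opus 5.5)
Your proof is correct. The paper gives no argument of its own for this lemma; it cites \cite[Lemma 3.2]{CHW}. The heart of your reverse bound is the weighted sub-mean inequality
$$|f(a)|^pe^{-\frac{p\alpha}{2}|a|^2}w\big(D(a,1)\big)\lesssim\int_{D(a,1)}|f|^pe^{-\frac{p\alpha}{2}|z|^2}w\,dA,$$
which you obtain from Lemma \ref{sub-mean1} with exponent $p/q$, H\"older, and the restricted $A_q$ condition on $Q_2(a)\supset D(a,1)$. This is exactly the estimate the paper imports from \cite[Lemma 3.1]{CFP} in the proof of Lemma \ref{sampling}, and your derivation of it is sound, including the case $q=1$.

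Where your route departs from the natural one is the lattice discretization, which is not needed. Write $g=|f|^pe^{-\frac{p\alpha}{2}|\cdot|^2}$. Tonelli gives
$$\int_{\C}g\,\widehat{w}\,dA=\frac{1}{\pi}\int_{\C}g(z)\,w\big(D(z,1)\big)\,dA(z)=\frac{1}{\pi}\int_{\C}w(u)\int_{D(u,1)}g(z)\,dA(z)\,dA(u).$$
\begin{itemize}
\item Using the right-hand expression and the unweighted Lemma \ref{sub-mean1} at each $u$ gives $\|f\|_{F^p_{\alpha,w}}\lesssim\|f\|_{F^p_{\alpha,\widehat{w}}}$. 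This needs no hypothesis on $w$ beyond its being a weight.
\item Using the middle expression and the weighted sub-mean inequality at each $z$, followed by one more application of Tonelli, gives the converse.
\end{itemize}

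Your discretized version reaches the same conclusion, but it spends Lemma \ref{w-esti} and bounded-overlap counting in both directions. The nontrivial comparison $w(D(\nu,1))\lesssim w(Q_r(\nu))$ via Lemma \ref{w-sq}, which you set up at the start, is never actually used. Only the trivial inclusion bound $w(Q_r(\nu))\le w(D(\nu,1))$ enters, together with $\widehat{w}\asymp w(D(\nu,1))$ on $Q_r(\nu)$. The Fubini formulation shows that all the weight theory sits in the single pointwise estimate above.
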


As stated in the Introduction, the Muckenhoupt $A_{\infty}$ weights satisfy the following reverse H\"older inequality; see \cite[Theorem 7.2.2]{Gr} or \cite[Theorem 2.3]{HPR}.

\begin{theorem}\label{rh}
Let $w\in A_{\infty}$. Then there exist $s=s(w)>1$ and $C_{\ref{rh}}=C_{\ref{rh}}(w)>0$ such that for any $a\in\C$ and $r>0$,
$$\left(\frac{1}{\pi r^2}\int_{D(a,r)}w^sdA\right)^{1/s}\leq \frac{C_{\ref{rh}}}{\pi r^2}\int_{D(a,r)}wdA.$$
\end{theorem}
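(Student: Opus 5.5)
We would first reduce from disks to squares and then run the classical Calder\'on--Zygmund argument. Since $w\in A_{\infty}$, we have $w\in A_p$ for some $1\leq p<\infty$. The first step is the standard consequence of H\"older's inequality applied to $\chi_E=\chi_E w^{1/p}w^{-1/p}$: for every square $Q$ and every measurable $E\subset Q$,
$$\left(\frac{A(E)}{A(Q)}\right)^p\leq [w]_{A_p}\frac{w(E)}{w(Q)}.$$
Here $[w]_{A_p}$ denotes the $A_p$ supremum defining the class. This gives doubling of $w$ (taking $E=Q$ inside $3Q$). It also gives the key "$A_\infty$" property: there are $\beta,\gamma\in(0,1)$ such that $A(E)\leq\gamma A(Q)$ implies $w(E)\leq\beta w(Q)$, obtained by applying the inequality to $Q\setminus E$.

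The second step proves reverse H\"older on a fixed square $Q$. We run the dyadic Calder\'on--Zygmund decomposition of $w$ relative to $Q$ at the levels $\lambda_k=(2^{2}/\gamma)^k\, w(Q)/A(Q)$. At each level we get disjoint maximal dyadic subsquares $Q^k_j$ with $\lambda_k< w(Q^k_j)/A(Q^k_j)\leq 4\lambda_k$, and $w\leq\lambda_k$ a.e. off $\Omega_k=\bigcup_j Q^k_j$. Nestedness gives, for each $Q^k_j$, that $A(\Omega_{k+1}\cap Q^k_j)\leq \gamma A(Q^k_j)$. The $A_\infty$ property then yields $w(\Omega_{k+1}\cap Q^k_j)\leq\beta w(Q^k_j)$, hence $w(\Omega_{k+1})\leq\beta w(\Omega_k)$, and so $w(\Omega_k)\leq\beta^k w(Q)$. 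Splitting
$$\int_Q w^{1+\varepsilon}=\int_{Q\setminus\Omega_0}+\sum_k\int_{\Omega_k\setminus\Omega_{k+1}}w\cdot w^{\varepsilon},$$
we bound $w^\varepsilon\leq\lambda_{k+1}^\varepsilon$ on $\Omega_k\setminus\Omega_{k+1}$. This gives
$$\int_Q w^{1+\varepsilon}\lesssim \left(\frac{w(Q)}{A(Q)}\right)^{\varepsilon}w(Q)\sum_k\left((4/\gamma)^{\varepsilon}\right)^{k}\beta^k,$$
which converges for $\varepsilon>0$ small enough that $(4/\gamma)^\varepsilon\beta<1$. Taking $s=1+\varepsilon$ gives the reverse H\"older inequality on squares with a constant depending only on $p$ and $[w]_{A_p}$.

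Finally, to pass to disks, let $Q$ be the square with the same center as $D(a,r)$ and side $2r$, so that $D(a,r)\subset Q$. We bound $\int_{D(a,r)}w^s\leq\int_Q w^s$ and apply the square estimate. We then use $A(Q)\asymp r^2$ and $w(Q)\leq C\,w(D(a,r))$, the latter by doubling from the first step since $Q\subset D(a,\sqrt2 r)$. We expect the main obstacle to be the second step: keeping the bookkeeping of the stopping levels, so that the geometric decay $\beta^k$ beats the growth $(4/\gamma)^{\varepsilon k}$. The argument also needs $w(Q)<\infty$, which holds since $w$ is locally integrable; this justifies the a priori finiteness used in the splitting.
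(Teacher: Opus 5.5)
Your proposal is correct. It is essentially the classical Calder\'on--Zygmund proof of the reverse H\"older inequality (the argument behind Grafakos' Theorem 7.2.2, which the paper only cites and does not prove), followed by an elementary transfer from squares to disks. Two small points should be stated explicitly: the splitting of $\int_Q w^{1+\varepsilon}$ uses $A\big(\bigcap_k\Omega_k\big)=0$, which follows from $A(\Omega_k)\leq w(Q)/\lambda_k\to0$; and the bound $w(Q)\lesssim w\big(D(a,r)\big)$ follows directly from your first-step inequality with $E=D(a,r)\subset Q$, namely $w(Q)\leq(4/\pi)^p[w]_{A_p}\,w\big(D(a,r)\big)$, rather than from doubling on disks, which you only established for squares.
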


\section{Proof of Theorem \ref{main1}}\label{proof1}

In this section, we are going to prove Theorem \ref{main1}.

\subsection{Necessity}

The necessity of Theorem \ref{main1} is based on the following lemma.

\begin{lemma}\label{limit0}
For any $\beta>0$ and $w\in A^{\res}_{\infty}$,
$$\lim_{r\to\infty}\sup_{a\in\C}\frac{1}{w\big(D(a,1)\big)}\int_{\C\setminus D(a,r)}e^{-\beta|z-a|^2}w(z)dA(z)=0.$$
\end{lemma}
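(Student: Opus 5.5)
We will show that the weight $w$ grows at most exponentially in distance, relative to $w\big(D(a,1)\big)$, and that this growth is swamped by the Gaussian factor. Lemma \ref{w-sq} is the main tool.

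Fix $a\in\C$ and cover $\C\setminus D(a,r)$ by dyadic-type annuli $A_k:=D(a,k+1)\setminus D(a,k)$ for integers $k\geq\lfloor r\rfloor$. On $A_k$ we have $e^{-\beta|z-a|^2}\leq e^{-\beta k^2}$, so
$$\int_{\C\setminus D(a,r)}e^{-\beta|z-a|^2}w(z)\,dA(z)\leq\sum_{k\geq\lfloor r\rfloor}e^{-\beta k^2}\,w\big(D(a,k+1)\big).$$
The task is therefore to bound $w\big(D(a,k+1)\big)$ by $C^{k}\,w\big(D(a,1)\big)$ with $C$ depending only on $w$.

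To get this growth estimate, fix the lattice $\Z^2$ (so $r=1$ in Lemma \ref{w-sq}) and choose $\nu_0\in\Z^2$ with $a\in Q_1(\nu_0)$. The disk $D(a,k+1)$ is covered by the squares $Q_1(\nu)$ with $\nu\in\Z^2$ and $|\nu-\nu_0|\leq k+3$. There are at most $c(k+3)^2$ such squares, and by Lemma \ref{w-sq} each satisfies $w\big(Q_1(\nu)\big)\leq C_{\ref{w-sq}}^{k+3}\,w\big(Q_1(\nu_0)\big)$. Next we compare $w\big(Q_1(\nu_0)\big)$ with $w\big(D(a,1)\big)$. Since $Q_1(\nu_0)\subset D(a,2)$, Lemma \ref{w-esti} with $R=2$ gives $w\big(D(a,2)\big)\leq C_{\ref{w-esti}}\,w\big(D(a,1)\big)$. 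Combining,
$$w\big(D(a,k+1)\big)\leq c\,C_{\ref{w-esti}}(k+3)^2C_{\ref{w-sq}}^{k+3}\,w\big(D(a,1)\big)\leq C_1 M^{k}\,w\big(D(a,1)\big)$$
for suitable constants $C_1,M$ depending only on $w$.

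Dividing the series bound by $w\big(D(a,1)\big)$ then yields
$$\sup_{a\in\C}\frac{1}{w\big(D(a,1)\big)}\int_{\C\setminus D(a,r)}e^{-\beta|z-a|^2}w\,dA\leq C_1\sum_{k\geq\lfloor r\rfloor}e^{-\beta k^2}M^{k}.$$
The right-hand side is independent of $a$ and is the tail of a convergent series, because $e^{-\beta k^2+k\log M}$ decays superexponentially. It therefore tends to $0$ as $r\to\infty$, which proves the lemma.

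The only real obstacle is the uniform exponential growth bound in the second step. It relies on the constant in Lemma \ref{w-sq} being independent of the position of $a$, and on controlling the number of squares in the cover only polynomially in $k$. Everything else is routine.
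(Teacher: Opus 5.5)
Your proof is correct and is essentially the paper's argument. Both proofs cover the exterior region by unit lattice squares, use Lemma \ref{w-sq} to show that $w\big(Q_1(\nu)\big)$ grows at most exponentially relative to the square at $a$, and let the Gaussian factor $e^{-\beta|z-a|^2}$ beat that growth uniformly in $a$. The differences are cosmetic: the paper bounds the Gaussian square by square on the translated lattice $a+\Z^2$ and uses $Q_1(a)\subset D(a,1)$ directly, while you group by annuli, work on the fixed lattice $\Z^2$, and pay for this with one extra application of Lemma \ref{w-esti} (which also avoids the paper's tacit use of Lemma \ref{w-sq} on a translated lattice).
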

\begin{proof}
Fix $a\in\C$ and suppose $r>\sqrt{2}$. We claim that
$$\C\setminus D(0,r)\subset\bigcup_{\nu\in\mathbb{Z}^2\setminus D(0,r/2)}Q_1(\nu).$$
In fact, if $|z|\geq r$ and $z\in Q_1(\nu)$ for some $\nu\in\mathbb{Z}^2$, then
$$|\nu|\geq|z|-|z-\nu|\geq r-\frac{\sqrt{2}}{2}>\frac{r}{2}.$$
Consequently, we have
\begin{align*}
I(a):&=\frac{1}{w\big(D(a,1)\big)}\int_{\C\setminus D(a,r)}e^{-\beta|z-a|^2}w(z)dA(z)\\
&=\frac{1}{w\big(D(a,1)\big)}\int_{\C\setminus D(0,r)}e^{-\beta|z|^2}w(z+a)dA(z)\\
&\leq\frac{1}{w\big(D(a,1)\big)}\sum_{\nu\in\mathbb{Z}^2\setminus D(0,r/2)}\int_{Q_1(\nu)}e^{-\beta|z|^2}w(z+a)dA(z).
\end{align*}
Noting that for $z\in Q_1(\nu)$, $|z|^2\geq\frac{1}{2}|\nu|^2-\frac{1}{2}$, we obtain
\begin{align*}
I(a)&\leq\frac{e^{\beta/2}}{w\big(D(a,1)\big)}\sum_{\nu\in\mathbb{Z}^2\setminus D(0,r/2)}e^{-\frac{\beta}{2}|\nu|^2}
    \int_{Q_1(\nu)}w(z+a)dA(z)\\
&\leq\frac{e^{\beta/2}}{w\big(Q_1(a)\big)}\sum_{\nu\in\mathbb{Z}^2\setminus D(0,r/2)}e^{-\frac{\beta}{2}|\nu|^2}w\big(Q_1(\nu+a)\big).
\end{align*}
By Lemma \ref{w-sq}, there exists $C_{\ref{w-sq}}=C_{\ref{w-sq}}(w)>0$ such that
$$I(a)\leq e^{\beta/2}\sum_{\nu\in\mathbb{Z}^2\setminus D(0,r/2)}e^{-\frac{\beta}{2}|\nu|^2}C_{\ref{w-sq}}^{|\nu|}.$$
Since $a\in\C$ is arbitrary, the desired result follows.
\end{proof}

We are now ready to establish the necessity of Theorem \ref{main1}, which is contained in the following proposition.

\begin{proposition}\label{necessity}
Let $0<p,\alpha<\infty$, $w\in A^{\res}_{\infty}$, and let $G\subset\C$ be a measurable set. If $G$ is a dominating set for $F^p_{\alpha,w}$, then there exist $R>0$ and $\delta\in(0,1)$ such that for all $a\in\C$,
$$w\big(G\cap D(a,R)\big)>\delta w\big(D(a,R)\big).$$
\end{proposition}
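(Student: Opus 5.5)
We test the dominating inequality against the normalized reproducing kernels. For $a\in\C$, set $f=K_a$. Then
$$|K_a(z)|^pe^{-\frac{p\alpha}{2}|z|^2}=e^{\frac{p\alpha}{2}|a|^2}e^{-\frac{p\alpha}{2}|z-a|^2},$$
because $|e^{\alpha\bar a z}|=e^{\alpha\,\mathrm{Re}(\bar a z)}$ and $|z-a|^2=|z|^2+|a|^2-2\mathrm{Re}(\bar a z)$. By Lemma \ref{test},
$$\|K_a\|^p_{F^p_{\alpha,w}}\geq C_{\ref{test}}^{-p}e^{\frac{p\alpha}{2}|a|^2}w\big(D(a,1)\big).$$
The dominating property gives a constant $C>0$ with
$$C_{\ref{test}}^{-p}w\big(D(a,1)\big)\leq C\int_{G}e^{-\frac{p\alpha}{2}|z-a|^2}w(z)dA(z).$$

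We split the integral over $G\cap D(a,R)$ and $G\setminus D(a,R)$. On the first piece we bound the exponential by $1$, which gives at most $w(G\cap D(a,R))$. On the second piece we apply Lemma \ref{limit0} with $\beta=p\alpha/2$ and choose $R$ large, independent of $a$, so that
$$\int_{\C\setminus D(a,R)}e^{-\frac{p\alpha}{2}|z-a|^2}w\,dA\leq\frac{1}{2CC_{\ref{test}}^{p}}w\big(D(a,1)\big)\quad\text{for all }a.$$
Absorbing this term yields
$$w\big(G\cap D(a,R)\big)\geq\frac{1}{2CC_{\ref{test}}^p}w\big(D(a,1)\big).$$

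To finish, we compare $w(D(a,1))$ with $w(D(a,R))$ using Lemma \ref{w-esti} with $z=a$: $w(D(a,R))\leq C_{\ref{w-esti}}w(D(a,1))$. Hence
$$w\big(G\cap D(a,R)\big)\geq\delta_0\,w\big(D(a,R)\big),\qquad \delta_0:=\frac{1}{2CC_{\ref{test}}^pC_{\ref{w-esti}}}.$$
Taking $\delta=\min\{\delta_0/2,1/2\}$ gives the strict inequality with $\delta\in(0,1)$. Strictness is fine because $w(D(a,R))>0$: by Lemma \ref{w-sq}, an $A^{\res}_\infty$ weight cannot vanish on a square without vanishing identically.

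The only real step is choosing $R$ uniformly in $a$ so that the tail is absorbed. Lemma \ref{limit0} provides exactly this uniform decay, so I do not expect a serious obstacle.
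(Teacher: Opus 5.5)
Your proof is correct and follows the paper's argument step for step. Like the paper, you test the dominating inequality on reproducing kernels with Lemma \ref{test}, absorb the tail outside $D(a,R)$ uniformly in $a$ via Lemma \ref{limit0}, and compare $w(D(a,1))$ with $w(D(a,R))$ by Lemma \ref{w-esti}; the only cosmetic differences are that the paper pre-normalizes $K_a$ and gets strictness from a strict tail bound, whereas you halve $\delta_0$.
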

\begin{proof}
Suppose that $G$ is a dominating set for $F^p_{\alpha,w}$. Then there exists $C>0$ such that for any $f\in F^p_{\alpha,w}$,
\begin{equation}\label{nece}
\|f\|^p_{F^p_{\alpha,w}}\leq C\int_{G}|f(z)|^pe^{-\frac{p\alpha}{2}|z|^2}w(z)dA(z).
\end{equation}
For any $a\in\C$, let
$$f_a(z):=w\big(D(a,1)\big)^{-1/p}e^{\alpha\overline{a}z-\frac{\alpha}{2}|a|^2},\quad z\in\C.$$
Then by Lemma \ref{test},
$$C_{\ref{test}}^{-1}\leq\|f_a\|_{F^p_{\alpha,w}}\leq C_{\ref{test}}.$$
Applying \eqref{nece} to $f_a$, we obtain that
$$\int_{G}|f_a(z)|^pe^{-\frac{p\alpha}{2}|z|^2}w(z)dA(z)\geq\frac{1}{CC_{\ref{test}}^p}.$$
On the other hand, we can use Lemma \ref{limit0} to find $R>1$ such that
\begin{align*}
\int_{G\setminus D(a,R)}&|f_a(z)|^pe^{-\frac{p\alpha}{2}|z|^2}w(z)dA(z)\\
&\leq\frac{1}{w\big(D(a,1)\big)}\int_{\C\setminus D(a,R)}e^{-\frac{p\alpha}{2}|z-a|^2}w(z)dA(z)<\frac{1}{2CC_{\ref{test}}^p}.
\end{align*} 
Consequently,
\begin{align*}
\frac{w\big(G\cap D(a,R)\big)}{w\big(D(a,1)\big)}
&\geq\frac{1}{w\big(D(a,1)\big)}\int_{G\cap D(a,R)}e^{-\frac{p\alpha}{2}|z-a|^2}w(z)dA(z)\\
&=\int_{G\cap D(a,R)}|f_a(z)|^pe^{-\frac{p\alpha}{2}|z|^2}w(z)dA(z)\\
&=\left(\int_{G}-\int_{G\setminus D(a,R)}\right)|f_a(z)|^pe^{-\frac{p\alpha}{2}|z|^2}w(z)dA(z)\\
&>\frac{1}{2CC_{\ref{test}}^p},
\end{align*}
which, combined with Lemma \ref{w-esti}, implies that
$$\frac{w\big(G\cap D(a,R)\big)}{w\big(D(a,R)\big)}
>\frac{1}{2CC_{\ref{test}}^p}\frac{w\big(D(a,1)\big)}{w\big(D(a,R)\big)}\geq\frac{1}{2CC_{\ref{w-esti}}C_{\ref{test}}^p}.$$
The arbitrariness of $a\in\C$ finishes the proof.
\end{proof}

\subsection{Sufficiency}

We now turn to the sufficiency of Theorem \ref{main1}, which is based on the following characterization of dominating sets for the spaces $F^p_{\alpha,\widehat{w}}$.

\begin{theorem}\label{suff}
Let $0<p,\alpha<\infty$, $w\in A^{\res}_{\infty}$, and let $G\subset\C$ be a measurable set. Then $G$ is a dominating set for $F^p_{\alpha,\widehat{w}}$ if and only if there exist $R>0$ and $\delta\in(0,1)$ such that for any $a\in\C$,
\begin{equation}\label{hat-suff}
\widehat{w}\big(G\cap D(a,R)\big)>\delta \widehat{w}\big(D(a,R)\big).
\end{equation}
\end{theorem}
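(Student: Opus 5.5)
Necessity follows from Proposition \ref{necessity} applied to the weight $\widehat{w}$. We first check that $\widehat{w}\in A^{\res}_{\infty}$, and in fact that $\widehat{w}$ is comparable to a constant on each disk of radius $1$: by Lemma \ref{w-esti}, $\widehat{w}(z)\asymp \widehat{w}(a)$ whenever $|z-a|<2$, with constants depending only on $w$. Hence $\widehat{w}\in A^{\res}_1$. Moreover, by Lemma \ref{hat-norm} the spaces $F^p_{\alpha,w}$ and $F^p_{\alpha,\widehat{w}}$ coincide with equivalent norms. Proposition \ref{necessity} with $\widehat{w}$ in place of $w$ then gives \eqref{hat-suff} directly.

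For sufficiency, the plan is to adapt Luecking's argument and work locally. Since $\widehat{w}$ is essentially constant on disks of fixed radius (by Lemma \ref{w-esti} with radius $R$), condition \eqref{hat-suff} implies a Lebesgue-measure density condition:
$$A\big(G\cap D(a,R)\big)\geq \delta' A\big(D(a,R)\big)$$
for some $\delta'>0$ depending on $\delta,R,w$. Likewise
$$\|f\|^p_{F^p_{\alpha,\widehat{w}}}\asymp\sum_{\nu}\widehat{w}(\nu)\int_{Q_\nu}|f|^pe^{-\frac{p\alpha}{2}|z|^2}\,dA,$$
where $Q_\nu$ runs over a lattice of squares of side comparable to $R$. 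The goal is to show that for a large proportion of the squares, measured by the norm contribution, one has
$$\int_{Q_\nu}|f|^pe^{-\frac{p\alpha}{2}|z|^2}\,dA\lesssim\int_{G\cap D_\nu}|f|^pe^{-\frac{p\alpha}{2}|z|^2}\,dA,$$
where $D_\nu$ is a disk of radius comparable to $R$ containing $Q_\nu$.

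Concretely, call a square \emph{good} if
$$\int_{\widetilde Q_\nu}|f|^pe^{-\frac{p\alpha}{2}|z|^2}\,dA\leq K\int_{Q_\nu}|f|^pe^{-\frac{p\alpha}{2}|z|^2}\,dA,$$
where $\widetilde Q_\nu$ is an enlarged neighborhood. Since $\widehat{w}$ is comparable on neighboring squares and each point lies in boundedly many enlargements, the weighted sum over bad squares is at most $CK^{-1}\|f\|^p$. Choosing $K$ large, the good squares carry at least half of the norm. On a good square, set $g(z)=|f(z)|e^{-\frac{\alpha}{2}|z|^2}$. Lemma \ref{lip} shows that $g$ is Lipschitz on $Q_\nu$ with constant bounded by
$$C\Big(K\int_{Q_\nu}g^p\,dA\Big)^{1/p}.$$
Pick a point $z_0\in Q_\nu$ where $g^p$ is at least its average over $Q_\nu$. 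Then on a disk around $z_0$ of small radius $\eta$, chosen in terms of $K$ and $\delta'$, we have $g\geq\frac12 g(z_0)$. This disk is too small to guarantee that it meets $G$. We therefore need a step showing that $g$ cannot be small on most of $G\cap D_\nu$, and this is where the main obstacle lies.

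For this step, the tool I would try first is a Remez/Logvinenko--Sereda type inequality for normalized holomorphic functions on a disk of fixed size. Apply it to $F=f\cdot e^{-\alpha\bar{\nu} z+\frac{\alpha}{2}|\nu|^2}$, which satisfies $|F|\asymp g$ on $D_\nu$. The good-square condition bounds the doubling ratio of $F$ between $Q_\nu$ and $\widetilde Q_\nu$, and hence bounds $\sup_{D_\nu}|F|$ by $K^{1/p}$ times its mean over $Q_\nu$. A Cartan/Remez lemma for holomorphic functions with bounded growth then gives
$$\sup_{D_\nu}|F|\leq C(K,\delta')\,\frac{1}{A(E)}\int_E|F|^p\,dA\quad (\text{to the power }1/p)$$
for any $E\subset D_\nu$ with $A(E)\geq\delta' A(D_\nu)$. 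Taking $E=G\cap D_\nu$ gives the desired local estimate. Summing over the good squares and using once more that $\widehat{w}$ is essentially constant on $D_\nu$ yields
$$\|f\|^p\lesssim\int_G|f|^pe^{-\frac{p\alpha}{2}|z|^2}\widehat{w}\,dA.$$
The delicate point is proving this Remez-type inequality uniformly in $\nu$, with constants independent of $f$. That uniformity is exactly what the normalization by the doubling constant $K$ is designed to provide.
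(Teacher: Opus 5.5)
Your necessity argument matches the paper's: $\widehat w$ is locally comparable to a constant by Lemma~\ref{w-esti}, so $\widehat w\in A^{\res}_{\infty}$, and Proposition~\ref{necessity} applies.

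For sufficiency you take a genuinely different route, and it is sound. The paper argues pointwise, in the style of Luecking and Wang--Zhao. A weighted sub-mean inequality for $\log\big(\gamma_0|f|^pe^{-\frac{p\alpha}{2}|z|^2}\big)$ against $\widehat w\,dA$ (Lemma~\ref{sub-mean2}), combined with Jensen's inequality, shows that the large-value set $E_\lambda(a)$ fills most of $D(a,R)$ in $\widehat w$-measure wherever $B_\lambda f(a)/\big(|f(a)|^pe^{-\frac{p\alpha}{2}|a|^2}\big)$ is bounded (Lemma~\ref{m-ratio}). The exceptional sets $A_\epsilon$, $B_\epsilon$ are shown to carry little of the norm (Lemmas~\ref{A-inte} and \ref{B-inte}, the latter via the Lipschitz estimate of Lemma~\ref{lip}), and Fubini finishes.

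You instead run Kovrijkine's Logvinenko--Sereda scheme:
\begin{enumerate}
\item reduce the $\widehat w$-density hypothesis to Lebesgue density and discretize into squares;
\item discard the squares that violate the doubling condition, using bounded overlap;
\item on the good squares, apply a Remez/Cartan inequality with doubling index $N\lesssim 1+\log K$.
\end{enumerate}
This works for all $0<p<\infty$ with constants uniform in $\nu$, provided the growth is controlled on a region strictly containing $D_\nu$; your arrangement $D_\nu\subset\subset\widetilde Q_\nu$ supplies this. The $L^p$ form of Remez that you need does not require a heavy black box. It follows from the level-set bound $A\big(\{z\in D_\nu:|F(z)|<t\sup_{D_\nu}|F|\}\big)\lesssim N/\log(1/t)$, which comes from the sub-mean value property of $\log|F|$ on circles centred at a point where $|F|$ attains its maximum over $\overline{Q_\nu}$, together with Chebyshev's inequality.

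So both proofs rest on the same log-subharmonicity. The paper's version is self-contained and works directly with $\widehat w\,dA$, without discretizing. Yours separates the global bookkeeping (good versus bad squares) from a purely local statement about holomorphic functions on a fixed disk, which makes it more modular and reusable. The price is that you must import, or prove along the lines above, that Remez-type lemma.
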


Thanks to Theorem \ref{suff}, we can establish the sufficiency of Theorem \ref{main1}.

\begin{proof}[Proof of Theorem \ref{main1}. Sufficiency]
Suppose that there exist $R>0$ and $\delta\in(0,1)$ such that for any $a\in\C$,
\begin{equation}\label{condition}
w\big(G\cap D(a,R)\big)>\delta w\big(D(a,R)\big).
\end{equation}
We are going to show that $G$ is a dominating set for $F^p_{\alpha,w}$. Define
$$\Omega:=\left\{z\in\C:w(z)\geq\frac{\delta}{2R^2C_{\ref{w-esti}}}\widehat{w}(z)\right\},$$
where $C_{\ref{w-esti}}$ is the constant from Lemma \ref{w-esti} associated with $R$ and $w$. Then we apply Lemma \ref{w-esti} to obtain that for any $a\in\C$,
\begin{align*}
w\big(D(a,R)\setminus\Omega\big)
&=\int_{D(a,R)\setminus\Omega}w(z)dA(z)\\
&<\frac{\delta}{2R^2C_{\ref{w-esti}}}\int_{D(a,R)\setminus\Omega}\widehat{w}(z)dA(z)\\
&\leq\frac{\delta w\big(D(a,R)\big)}{2\pi R^2}A\big(D(a,R)\setminus\Omega\big)\\
&\leq\frac{\delta}{2}w\big(D(a,R)\big),
\end{align*}
which implies that
$$w\big(\Omega\cap D(a,R)\big)=w\big(D(a,R)\big)-w\big(D(a,R)\setminus\Omega\big)>\left(1-\frac{\delta}{2}\right)w\big(D(a,R)\big).$$
Consequently,
$$w\big(G\cap\Omega\cap D(a,R)\big)>\left(1-\frac{\delta}{2}\right)w\big(D(a,R)\big)-w\big(\big(\Omega\cap D(a,R)\big)\setminus G\big).$$
Noting that the condition \eqref{condition} implies that
$$w\big(G\cap \Omega\cap D(a,R)\big)>\delta w\big(D(a,R)\big)-w\big(\big(G\cap D(a,R)\big)\setminus\Omega\big),$$
we obtain that
\begin{equation}\label{delta/4}
w\big(G\cap \Omega\cap D(a,R)\big)>\frac{\delta}{4}w\big(D(a,R)\big).
\end{equation}
Since $w\in A_{\infty}$, we apply Lemma \ref{w-esti}, H\"older's inequality and Theorem \ref{rh} to obtain that for some $s=s(w)>1$,
\begin{align*}
&w\big(G\cap\Omega\cap D(a,R)\big)\\
&\ =\int_{G\cap\Omega\cap D(a,R)}\frac{w(z)}{\widehat{w}(z)^{1/s'}}\widehat{w}(z)^{1/s'}dA(z)\\
&\ \leq\left(\frac{\pi C_{\ref{w-esti}}}{w\big(D(a,R)\big)}\right)^{1/s'}\int_{G\cap\Omega\cap D(a,R)}w(z)\widehat{w}(z)^{1/s'}dA(z)\\
&\ \leq\left(\frac{\pi C_{\ref{w-esti}}}{w\big(D(a,R)\big)}\right)^{1/s'}\left(\int_{D(a,R)}w^sdA\right)^{1/s}
    \cdot\widehat{w}\big(G\cap\Omega\cap D(a,R)\big)^{1/s'}\\
&\ \leq\frac{C_{\ref{w-esti}}^{1/s'}C_{\ref{rh}}}{R^{2/s'}}w\big(D(a,R)\big)^{1/s}
    \cdot\widehat{w}\big(G\cap\Omega\cap D(a,R)\big)^{1/s'}.
\end{align*}
Combining this with \eqref{delta/4} yields that
\begin{align*}
\widehat{w}\big(G\cap\Omega\cap D(a,R)\big)&>\frac{\delta^{s'}R^2}{4^{s'}C_{\ref{w-esti}}C_{\ref{rh}}^{s'}}w\big(D(a,R)\big)\\
&=\frac{\delta^{s'}}{4^{s'}C_{\ref{w-esti}}C_{\ref{rh}}^{s'}}\int_{D(a,R)}\frac{w\big(D(a,R)\big)}{\pi}dA(z)\\
&\geq\frac{\delta^{s'}}{4^{s'}C^2_{\ref{w-esti}}C_{\ref{rh}}^{s'}}\widehat{w}\big(D(a,R)\big),
\end{align*}
where the last inequality is due to Lemma \ref{w-esti}. Therefore, using Theorem \ref{suff}, we obtain that $G\cap\Omega$ is a dominating set for the space $F^p_{\alpha,\widehat{w}}$. Consequently, there exists $C>0$ such that for any $f\in F^p_{\alpha,\widehat{w}}$,
\begin{align*}
\|f\|^p_{F^p_{\alpha,\widehat{w}}}&\leq C\int_{G\cap\Omega}|f(z)|^pe^{-\frac{p\alpha}{2}|z|^2}\widehat{w}(z)dA(z)\\
&\leq\frac{2R^2CC_{\ref{w-esti}}}{\delta}\int_{G}|f(z)|^pe^{-\frac{p\alpha}{2}|z|^2}w(z)dA(z),
\end{align*}
which, in conjunction with Lemma \ref{hat-norm}, gives that $G$ is a dominating set for $F^p_{\alpha,w}$.
\end{proof}

We now turn to the proof of Theorem \ref{suff}. Note that by Lemma \ref{w-esti}, $w\in A^{\res}_{\infty}$ implies that $\widehat{w}\in A^{\res}_{\infty}$. Hence the necessity of Theorem \ref{suff} is a direct consequence of Proposition \ref{necessity}. The rest part of this section is devoted to proving the sufficiency of Theorem \ref{suff}. Fix $R>0$ and $\delta\in(0,1)$ such that \eqref{hat-suff} holds for any $a\in\C$. 
Write
$$K_0=K_0(p,\alpha,R):=\frac{p\alpha R^2}{2\left(1-e^{-p\alpha R^2/2}\right)}$$
and
$$\gamma_0=\gamma_0(p,\alpha,R,w):=e^{\frac{p\alpha}{4}R^2+C_{\ref{w-esti}}^2}K_0,$$
where $C_{\ref{w-esti}}$ is the constant from Lemma \ref{w-esti} associated with $R$ and $w$. It is easy to verify that $K_0>1$. For each $f\in\h(\C)$, $a\in\C$ and $0<\lambda<\min\left\{1,2^{-p}K_0\right\}$, define
$$E_{\lambda}(a)=E_{\lambda}(f,a):
=\left\{z\in D(a,R):\gamma_0|f(z)|^pe^{-\frac{p\alpha}{2}|z|^2}>\lambda|f(a)|^pe^{-\frac{p\alpha}{2}|a|^2}\right\}$$
and
$$B_{\lambda}f(a):=\frac{\gamma_0}{\widehat{w}\big(E_{\lambda}(a)\big)}
\int_{E_{\lambda}(a)}|f(z)|^pe^{-\frac{p\alpha}{2}|z|^2}\widehat{w}(z)dA(z).$$
Noting that for any $z\in D(a,R)$, Lemma \ref{w-esti} gives that
\begin{equation}\label{hat-in}
\widehat{w}(z)\geq\frac{w\big(D(a,R)\big)}{\pi C_{\ref{w-esti}}}
=\frac{1}{\pi R^2C_{\ref{w-esti}}}\int_{D(a,R)}\frac{w\big(D(a,R)\big)}{\pi}dA(u)
\geq\frac{\widehat{w}\big(D(a,R)\big)}{\pi R^2C_{\ref{w-esti}}^2},
\end{equation}
we apply Lemma \ref{sub-mean1} to obtain that
\begin{align*}
B_{\lambda}f(a)
&\geq\frac{\gamma_0}{\widehat{w}\big(D(a,R)\big)}\int_{D(a,R)}|f(z)|^pe^{-\frac{p\alpha}{2}|z|^2}\widehat{w}(z)dA(z)\\
&>\frac{C_{\ref{w-esti}}^2K_0}{\widehat{w}\big(D(a,R)\big)}\int_{D(a,R)}|f(z)|^pe^{-\frac{p\alpha}{2}|z|^2}\widehat{w}(z)dA(z)\\
&\geq\frac{K_0}{\pi R^2}\int_{D(a,R)}|f(z)|^pe^{-\frac{p\alpha}{2}|z|^2}dA(z)\\
&\geq|f(a)|^pe^{-\frac{p\alpha}{2}|a|^2}.
\end{align*}

We will need the following sub-mean inequality.

\begin{lemma}\label{sub-mean2}
For any $f\in\h(\C)$ and $a\in\C$,
$$\log\left(|f(a)|^pe^{-\frac{p\alpha}{2}|a|^2}\right)
<\frac{1}{\widehat{w}\big(D(a,R)\big)}\int_{D(a,R)}\log\left(\gamma_0|f(z)|^pe^{-\frac{p\alpha}{2}|z|^2}\right)\widehat{w}(z)dA(z).$$
\end{lemma}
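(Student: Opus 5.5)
The plan is to reduce the weighted sub-mean inequality to the classical sub-mean value property of $\log|h|$ for an entire $h$, and then to absorb the non-radial weight $\widehat{w}$ using its two-sided comparability with a constant on $D(a,R)$. The factor $e^{C_{\ref{w-esti}}^2}$ inside $\gamma_0$ is there to pay for that comparability. I have not checked that this last absorption step closes, and I flag it below as the main obstacle.

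\emph{Step 1 (recentering).} Assume $f(a)\neq0$; otherwise the left side is $-\infty$ and there is nothing to prove. Put $h(z):=f(z)e^{-\alpha\overline{a}z}$. From the identity $|z|^2=|z-a|^2+2\,\mathrm{Re}(\overline{a}z)-|a|^2$ we get
$$\log\left(|f(z)|^pe^{-\frac{p\alpha}{2}|z|^2}\right)=p\log|h(z)|+\frac{p\alpha}{2}|a|^2-\frac{p\alpha}{2}|z-a|^2 .$$
Set
$$\varphi(z):=\log\frac{\gamma_0|f(z)|^pe^{-\frac{p\alpha}{2}|z|^2}}{|f(a)|^pe^{-\frac{p\alpha}{2}|a|^2}}=\log\gamma_0+p\log\frac{|h(z)|}{|h(a)|}-\frac{p\alpha}{2}|z-a|^2 .$$
The claim is then equivalent to $\int_{D(a,R)}\varphi\,\widehat{w}\,dA>0$.

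\emph{Step 2 (unweighted estimate).} Since $\log|h|$ is subharmonic, its area mean over $D(a,R)$ is at least $\log|h(a)|$. The mean of $|z-a|^2$ over the disk is $R^2/2$. Hence
$$\frac{1}{\pi R^2}\int_{D(a,R)}\varphi\,dA\ \ge\ \log\gamma_0-\frac{p\alpha R^2}{4}=C_{\ref{w-esti}}^2+\log K_0>C_{\ref{w-esti}}^2,$$
using $K_0>1$. This is exactly where the choice of $\gamma_0$ enters.

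\emph{Step 3 (inserting the weight; main obstacle).} By \eqref{hat-in} and Lemma \ref{w-esti}, on $D(a,R)$ we have
$$m:=\frac{\widehat{w}\big(D(a,R)\big)}{\pi R^2C_{\ref{w-esti}}^2}\ \le\ \widehat{w}\ \le\ \frac{C_{\ref{w-esti}}^2\,\widehat{w}\big(D(a,R)\big)}{\pi R^2}=:M .$$
I would split $\varphi=\varphi^+-\varphi^-$ and estimate
$$\int\varphi\,\widehat{w}\ \ge\ m\int\varphi^+-M\int\varphi^- .$$
The difficulty is that Step 2 controls only $\int\varphi^+-\int\varphi^-$, not the ratio of the two integrals. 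A large negative part (near zeros of $h$) weighted by $M$ could in principle beat a positive part weighted only by $m$. Closing the argument therefore needs an a priori bound on $\int_{D(a,R)}\varphi^-\,dA$ in terms of the constants.

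My first attempt would be to get that bound from the Riesz decomposition of $\log|h|$. Let $\mu_h=\frac{1}{2\pi}\Delta\log|h|$ be the zero-counting measure of $h$. The defect $\int_{D(a,R)}\log|h/h(a)|\,dA$ equals $\int G\,d\mu_h\ge0$ for an explicit positive kernel $G$. The negative part of $\log|h/h(a)|$ is produced by the logarithmic singularities at the zeros, and it is dominated by the same $\mu_h$-integral up to a constant. This would give $\int\varphi^-\le c\int\varphi^+ + c'$, and then the margin $C_{\ref{w-esti}}^2$ from Step 2 would absorb the loss $M/m=C_{\ref{w-esti}}^4$.

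If this fails, the fallback is to reconsider how $\widehat{w}$ enters. One option is to apply the sub-mean property of $\log|h|$ on the circles $\partial D(a,t)$ and then integrate in $t$, against the weight averaged in the angular variable. Another is to enlarge $\gamma_0$ so that the margin in Step 2 dominates a crude bound on $\int\varphi^-$.
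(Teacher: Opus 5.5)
Your Step 3 is a genuine gap, and none of your routes can close it, because the inequality is false as stated once $\widehat{w}$ is not symmetric about $a$.

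\emph{Counterexample.} Take $w=\chi_{\{\mathrm{Re}\,z<0\}}+2\chi_{\{\mathrm{Re}\,z\geq 0\}}$, which lies in $A_1\subset A_\infty$. Then $\widehat{w}(z)=g(\mathrm{Re}\,z)$, where $g$ is nondecreasing and strictly increasing on $(-1,1)$. Pairing $z$ with $-\bar z$ gives
\[
c:=\frac{1}{\widehat{w}\big(D(0,R)\big)}\int_{D(0,R)}\mathrm{Re}(z)\,\widehat{w}(z)\,dA(z)>0 .
\]
Let $f_t(z)=e^{-tz}$ and $a=0$. The left-hand side is $0$, while the right-hand side equals
\[
\log\gamma_0-ptc-\frac{p\alpha}{2\,\widehat{w}\big(D(0,R)\big)}\int_{D(0,R)}|z|^2\,\widehat{w}(z)\,dA(z)\leq \log\gamma_0-ptc .
\]
This is $\leq 0$ as soon as $t\geq \log\gamma_0/(pc)$.

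\emph{Why your remedies fail.} This is exactly the cancellation you worried about, but your remedy misplaces its source. Here $h=f_t$ has no zeros, so $\mu_h=0$ and the Riesz defect vanishes. Nevertheless $\int\varphi^+$ and $\int\varphi^-$ both grow linearly in $t$, while their difference $\int_{D(0,R)}\varphi\,dA$ stays fixed; Step 2 does not see the harmonic term $-pt\,\mathrm{Re}\,z$. So the negative part of $\varphi$ is not controlled by the zeros. The actual weighted integral $\int\varphi\,\widehat{w}\,dA$ tends to $-\infty$, so no fixed $\gamma_0$ suffices. Averaging over circles against the angularly averaged weight works only when $\widehat{w}$ is radial about $a$.

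\emph{The paper's proof.} It breaks at the same point. It passes from the unweighted mean to the $\widehat{w}$-mean using the pointwise lower bound \eqref{hat-in}, which is legitimate only for a nonnegative integrand, and $\log\big(e^{\frac{p\alpha}{4}R^2}K_0|f|^pe^{-\frac{p\alpha}{2}|z|^2}\big)$ is not nonnegative. It then identifies $C_{\ref{w-esti}}^2\int\log Y\,\widehat{w}\,dA$ with $\int\log\big(e^{C_{\ref{w-esti}}^2}Y\big)\,\widehat{w}\,dA$, which is not an identity. What actually holds is your Step 2, the unweighted inequality the paper imports from Wang and Zhao.

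\emph{A possible repair downstream.} Run Lemma \ref{m-ratio} with Lebesgue measure, with $B_\lambda$ defined by Lebesgue averages over $E_\lambda(z)$. This yields $A\big(E_\lambda(z)\big)\geq(1-\eta)\pi R^2$ off the exceptional set. Only then pass to $\widehat{w}$ using the upper bound $\widehat{w}\leq M$ on $D(z,R)$. That step is legitimate because it is applied to a nonnegative quantity:
\[
\widehat{w}\big(D(z,R)\setminus E_\lambda(z)\big)\leq M\eta\pi R^2=C_{\ref{w-esti}}^2\,\eta\,\widehat{w}\big(D(z,R)\big).
\]
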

\begin{proof}
Using the same method as in the proof of \cite[Lemma 3.2]{WZ}, we can establish
$$\log\left(|f(a)|^pe^{-\frac{p\alpha}{2}|a|^2}\right)<
\frac{1}{\pi R^2}\int_{D(a,R)}\log\left(e^{\frac{p\alpha}{4}R^2}K_0|f(z)|^pe^{-\frac{p\alpha}{2}|z|^2}\right)dA(z),$$
which, combined with \eqref{hat-in}, implies that
\begin{align*}
&\log\left(|f(a)|^pe^{-\frac{p\alpha}{2}|a|^2}\right)\\
&\ <\frac{C_{\ref{w-esti}}^2}{\widehat{w}\big(D(a,R)\big)}\int_{D(a,R)}
    \log\left(e^{\frac{p\alpha}{4}R^2}K_0|f(z)|^pe^{-\frac{p\alpha}{2}|z|^2}\right)\widehat{w}(z)dA(z)\\
&\ =\frac{1}{\widehat{w}\big(D(a,R)\big)}\int_{D(a,R)}\log\left(\gamma_0|f(z)|^pe^{-\frac{p\alpha}{2}|z|^2}\right)\widehat{w}(z)dA(z).
\end{align*}
The proof is complete.
\end{proof}

The following lemma gives an estimation of the weighted measure for the set $E_{\lambda}(a)$.

\begin{lemma}\label{m-ratio}
For any $f\in\h(\C)$, $a\in\C$ and $0<\lambda<\min\left\{1,2^{-p}K_0\right\}$,
$$\frac{\widehat{w}\big(E_{\lambda}(a)\big)}{\widehat{w}\big(D(a,R)\big)}>
\frac{\log\frac{1}{\lambda}}{\log\frac{B_{\lambda}f(a)}{|f(a)|^pe^{-\frac{p\alpha}{2}|a|^2}}+\log\frac{1}{\lambda}}.$$
\end{lemma}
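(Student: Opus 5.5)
Write $u(z):=\log\bigl(\gamma_0|f(z)|^pe^{-\frac{p\alpha}{2}|z|^2}\bigr)$ and $L:=\log\bigl(|f(a)|^pe^{-\frac{p\alpha}{2}|a|^2}\bigr)$. If $f(a)=0$ there is nothing to prove (or the inequality is read in the limiting sense), so assume $f(a)\neq0$. Let $\nu$ denote the probability measure $\widehat{w}\,dA/\widehat{w}(D(a,R))$ on $D(a,R)$, set $E=E_\lambda(a)$, and write $t:=\nu(E)$. The plan follows the Wang--Zhao argument with the weighted probability measure $\nu$ in place of normalized area measure. The starting point is Lemma \ref{sub-mean2}, which says $L<\int_{D(a,R)}u\,d\nu$. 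We split this integral over $E$ and over $D(a,R)\setminus E$.

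On the complement $D(a,R)\setminus E$ we have $u\le L+\log\lambda$ by the definition of $E_\lambda(a)$. Hence
$$\int_{D(a,R)\setminus E}u\,d\nu\le (1-t)(L-\log\tfrac1\lambda).$$
On $E$ we use Jensen's inequality for the concave function $\log$ with respect to the probability measure $\nu|_E/t$:
$$\frac1t\int_E u\,d\nu\le\log\Bigl(\frac1{\widehat{w}(E)}\int_E\gamma_0|f|^pe^{-\frac{p\alpha}{2}|z|^2}\widehat{w}\,dA\Bigr)=\log B_\lambda f(a).$$
For this we need $t>0$. Indeed $E\ni a$ and $E$ is open, because $\gamma_0>1>\lambda$ and the defining quantity is continuous; $\widehat{w}$ is positive since $w(D(z,1))>0$ for an $A_\infty^{\res}$ weight.

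Combining the two bounds with Lemma \ref{sub-mean2} gives
$$L<t\log B_\lambda f(a)+(1-t)(L-\log\tfrac1\lambda),$$
which rearranges to
$$t\Bigl(\log\frac{B_\lambda f(a)}{e^L}+\log\frac1\lambda\Bigr)>\log\frac1\lambda.$$
Both $\log(B_\lambda f(a)/e^L)\ge 0$ (shown before the lemma) and $\log\frac1\lambda>0$ (since $\lambda<1$), so the bracket is positive and dividing by it yields the claimed bound on $t=\widehat{w}(E_\lambda(a))/\widehat{w}(D(a,R))$.

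The only delicate point is the possibility of $u=-\infty$ on zeros of $f$, together with the integrability needed for Jensen's inequality. Since $\log|f|$ is locally integrable and $\widehat{w}$ is bounded above and below on $D(a,R)$ (Lemma \ref{w-esti}), all the integrals are finite or equal to $-\infty$ in a harmless direction. The strict inequality is inherited from Lemma \ref{sub-mean2}.
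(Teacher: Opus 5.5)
Your proposal is correct and follows the paper's proof essentially step for step. Like the paper, you split the integral in Lemma \ref{sub-mean2} over $E_\lambda(a)$ and its complement, apply Jensen on $E_\lambda(a)$ and the defining inequality of $E_\lambda(a)$ on the complement, and rearrange using $B_\lambda f(a)>|f(a)|^pe^{-\frac{p\alpha}{2}|a|^2}$ and $\lambda<1$. Your additional checks (that $\widehat{w}(E_\lambda(a))>0$, and the handling of $\log|f|=-\infty$ and of $f(a)=0$) fill in details the paper leaves implicit.
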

\begin{proof}
By Lemma \ref{sub-mean2}, we have
\begin{align*}
&\log\left(|f(a)|^pe^{-\frac{p\alpha}{2}|a|^2}\right)\\
&\ <\frac{\widehat{w}\big(E_{\lambda}(a)\big)}{\widehat{w}\big(D(a,R)\big)}\frac{1}{\widehat{w}\big(E_{\lambda}(a)\big)}
    \int_{E_{\lambda}(a)}\log\left(\gamma_0|f(z)|^pe^{-\frac{p\alpha}{2}|z|^2}\right)\widehat{w}(z)dA(z)\\
&\ \quad+\frac{1}{\widehat{w}\big(D(a,R)\big)}\int_{D(a,R)\setminus E_{\lambda}(a)}
    \log\left(\gamma_0|f(z)|^pe^{-\frac{p\alpha}{2}|z|^2}\right)\widehat{w}(z)dA(z).
\end{align*}
Using Jensen's inequality and the definition of $E_{\lambda}(a)$ yields that
\begin{align*}
&\log\left(|f(a)|^pe^{-\frac{p\alpha}{2}|a|^2}\right)\\
&\ <\frac{\widehat{w}\big(E_{\lambda}(a)\big)}{\widehat{w}\big(D(a,R)\big)}
    \log\left(\frac{\gamma_0}{\widehat{w}\big(E_{\lambda}(a)\big)}\int_{E_{\lambda}(a)}|f(z)|^pe^{-\frac{p\alpha}{2}|z|^2}\widehat{w}(z)dA(z)\right)\\
&\ \quad+\frac{1}{\widehat{w}\big(D(a,R)\big)}\int_{D(a,R)\setminus E_{\lambda}(a)}
    \log\left(\lambda|f(a)|^pe^{-\frac{p\alpha}{2}|a|^2}\right)\widehat{w}(z)dA(z)\\
&\ =\frac{\widehat{w}\big(E_{\lambda}(a)\big)}{\widehat{w}\big(D(a,R)\big)}\log B_{\lambda}f(a)\\
&\ \quad+\left(1-\frac{\widehat{w}\big(E_{\lambda}(a)\big)}{\widehat{w}\big(D(a,R)\big)}\right)
    \left(\log \lambda+\log\left(|f(a)|^pe^{-\frac{p\alpha}{2}|a|^2}\right)\right),
\end{align*}
which, combined with the fact that $B_{\lambda}f(a)>|f(a)|^pe^{-\frac{p\alpha}{2}|a|^2}$ and $0<\lambda<1$, gives the desired result.
\end{proof}

For $\epsilon\in(0,1)$, let the sets $A_{\epsilon}$ and $B_{\epsilon}$ are defined respectively by
$$A_{\epsilon}:=\left\{a\in\C:|f(a)|^pe^{-\frac{p\alpha}{2}|a|^2}
\leq\frac{\epsilon}{\widehat{w}\big(D(a,R)\big)}\int_{D(a,R)}|f(z)|^pe^{-\frac{p\alpha}{2}|z|^2}\widehat{w}(z)dA(z)\right\}$$
and
$$B_{\epsilon}:=\left\{a\in\C:|f(a)|^pe^{-\frac{p\alpha}{2}|a|^2}\leq\epsilon^{1+\frac{2}{p}}B_{\lambda}f(a)\right\}.$$
We have the following estimates.

\begin{lemma}\label{A-inte}
For any $f\in F^p_{\alpha,\widehat{w}}$,
$$\int_{A_{\epsilon}}|f(z)|^pe^{-\frac{p\alpha}{2}|z|^2}\widehat{w}(z)dA(z)\leq
C_{\ref{w-esti}}^2\epsilon\|f\|_{F^p_{\alpha,\widehat{w}}}^p.$$
\end{lemma}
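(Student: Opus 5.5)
We integrate the defining inequality of $A_{\epsilon}$ against $\widehat{w}$ and then use Fubini to exchange the order of integration. Write $g(z):=|f(z)|^pe^{-\frac{p\alpha}{2}|z|^2}$. For $a\in A_{\epsilon}$ we have
$$g(a)\leq\frac{\epsilon}{\widehat{w}\big(D(a,R)\big)}\int_{D(a,R)}g(z)\widehat{w}(z)dA(z).$$
Multiplying by $\widehat{w}(a)$ and integrating over $A_{\epsilon}$ gives
$$\int_{A_{\epsilon}}g\,\widehat{w}\,dA\leq\epsilon\int_{\C}\int_{\C}\chi_{D(a,R)}(z)\frac{\widehat{w}(a)}{\widehat{w}\big(D(a,R)\big)}g(z)\widehat{w}(z)dA(z)dA(a).$$
Here we have enlarged the outer domain $A_{\epsilon}$ to all of $\C$, which is allowed because the integrand is nonnegative.

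Next we apply Tonelli. The right-hand side becomes
$$\epsilon\int_{\C}g(z)\widehat{w}(z)\left(\int_{D(z,R)}\frac{\widehat{w}(a)}{\widehat{w}\big(D(a,R)\big)}dA(a)\right)dA(z).$$
It then suffices to show that the inner integral is bounded by $C_{\ref{w-esti}}^2$ uniformly in $z$.

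To bound the inner integral, we use the estimate \eqref{hat-in} with the centre $a$. For $u\in D(a,R)$ it gives $\widehat{w}(u)\geq\widehat{w}\big(D(a,R)\big)/(\pi R^2C_{\ref{w-esti}}^2)$. Integrating over $u\in D(a,R)$ does not help directly; instead, apply \eqref{hat-in} at the point $u=a$ itself, which lies in $D(a,R)$. This yields $\widehat{w}(a)\geq\widehat{w}\big(D(a,R)\big)/(\pi R^2C_{\ref{w-esti}}^2)$, which is the wrong direction.

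We need the reverse inequality $\widehat{w}(a)\leq C\,\widehat{w}\big(D(a,R)\big)/(\pi R^2)$, so we redo the computation behind \eqref{hat-in} with the inequalities reversed. By Lemma \ref{w-esti}, for $u\in D(a,R)$ we have $\widehat{w}(u)=w(D(u,1))/\pi\geq w(D(a,R))/(\pi C_{\ref{w-esti}})$. Hence
$$\widehat{w}\big(D(a,R)\big)\geq R^2w\big(D(a,R)\big)/C_{\ref{w-esti}}.$$
Also $\widehat{w}(a)=w(D(a,1))/\pi\leq C_{\ref{w-esti}}w(D(a,R))/\pi$, again by Lemma \ref{w-esti} with $z=a$. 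Combining the two,
$$\frac{\widehat{w}(a)}{\widehat{w}\big(D(a,R)\big)}\leq\frac{C_{\ref{w-esti}}^2}{\pi R^2}.$$
Since the inner integral is taken over $D(z,R)$, which has area $\pi R^2$, it is at most $C_{\ref{w-esti}}^2$. This gives the lemma.

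The only delicate point is this last step: checking that Lemma \ref{w-esti} applies in both directions with the same constant $C_{\ref{w-esti}}$ associated with $R$ and $w$. It does, because the lemma bounds $w(D(z,R))$ both above and below by $w(D(a,1))$ whenever $|a-z|<R$. Measurability of $A_{\epsilon}$ is clear from the continuity of the functions involved.
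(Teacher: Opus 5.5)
Your proof is correct and is essentially the paper's argument: you integrate the defining inequality of $A_{\epsilon}$ against $\widehat{w}$, swap the order of integration by Tonelli, and bound the inner integral using $\widehat{w}(a)/\widehat{w}\big(D(a,R)\big)\leq C_{\ref{w-esti}}^2/(\pi R^2)$, which is exactly the estimate \eqref{hatD} that the paper derives from Lemma \ref{w-esti} in the same two steps you use. The false start with \eqref{hat-in} is harmless, since you correctly notice it goes the wrong way, and enlarging $A_{\epsilon}\cap D(z,R)$ to $D(z,R)$ in the inner integral costs nothing.
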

\begin{proof}
By the definition of $A_{\epsilon}$ and Fubini's theorem, we have
\begin{align*}
&\int_{A_{\epsilon}}|f(z)|^pe^{-\frac{p\alpha}{2}|z|^2}\widehat{w}(z)dA(z)\\
&\ \leq\epsilon\int_{A_{\epsilon}}\frac{\widehat{w}(z)}{\widehat{w}\big(D(z,R)\big)}
    \int_{D(z,R)}|f(u)|^pe^{-\frac{p\alpha}{2}|u|^2}\widehat{w}(u)dA(u)dA(z)\\
&\ =\epsilon\int_{\C}|f(u)|^pe^{-\frac{p\alpha}{2}|u|^2}\widehat{w}(u)
    \int_{A_{\epsilon}\cap D(u,R)}\frac{\widehat{w}(z)}{\widehat{w}\big(D(z,R)\big)}dA(z)dA(u).
\end{align*}
By Lemma \ref{w-esti}, we know that for any $z\in\C$,
\begin{equation}\label{hatD}
\widehat{w}\big(D(z,R)\big)=\int_{D(z,R)}\widehat{w}(\xi)dA(\xi)\geq\frac{R^2w\big(D(z,R)\big)}{C_{\ref{w-esti}}}
\geq\frac{\pi R^2\widehat{w}(z)}{C_{\ref{w-esti}}^2},
\end{equation}
which finishes the proof.
\end{proof}

\begin{lemma}\label{B-inte}
There exist $C_{\ref{B-inte}}=C_{\ref{B-inte}}(p,\alpha,R,w)>0$ and $\epsilon_0=\epsilon_0(p,\alpha,R,w)\in(0,1)$ such that for any $f\in F^p_{\alpha,\widehat{w}}$ and $\epsilon\in(0,\epsilon_0)$,
$$\int_{B_{\epsilon}}|f(z)|^pe^{-\frac{p\alpha}{2}|z|^2}\widehat{w}(z)dA(z)\leq C_{\ref{B-inte}}\epsilon\|f\|_{F^p_{\alpha,\widehat{w}}}^p.$$
\end{lemma}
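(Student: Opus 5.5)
The plan is to avoid the fine structure of $E_{\lambda}(a)$ entirely and to dominate $B_{\lambda}f(a)$ by a local average of $|f|^pe^{-\frac{p\alpha}{2}|\cdot|^2}\widehat{w}$ over a slightly larger disk. Then $B_{\epsilon}$ is contained in a set of the same type as $A_{\epsilon}$, with a better power of $\epsilon$, and the Fubini argument of Lemma \ref{A-inte} applies.

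\emph{Step 1: a pointwise bound for $B_{\lambda}f(a)$.} Since $B_{\lambda}f(a)$ is $\gamma_0$ times a $\widehat{w}$-weighted average over $E_{\lambda}(a)\subset D(a,R)$, we have
$$B_{\lambda}f(a)\leq\gamma_0\sup_{z\in D(a,R)}|f(z)|^pe^{-\frac{p\alpha}{2}|z|^2}.$$
For $z\in D(a,R)$, Lemma \ref{sub-mean1} with radius $R$ bounds $|f(z)|^pe^{-\frac{p\alpha}{2}|z|^2}$ by a constant times $\int_{D(z,R)}|f|^pe^{-\frac{p\alpha}{2}|u|^2}dA(u)$. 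This is at most the same integral over $D(a,2R)$. By Lemma \ref{w-esti} (applied with radius $2R$), $\widehat{w}(u)$ is comparable to $\widehat{w}\big(D(a,2R)\big)/(4\pi R^2)$ uniformly for $u\in D(a,2R)$. Hence
$$B_{\lambda}f(a)\leq \frac{C_1}{\widehat{w}\big(D(a,2R)\big)}\int_{D(a,2R)}|f(u)|^pe^{-\frac{p\alpha}{2}|u|^2}\widehat{w}(u)dA(u),$$
with $C_1=C_1(p,\alpha,R,w)$ independent of $f$, $a$ and $\lambda$.

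\emph{Step 2: inclusion of $B_{\epsilon}$ and Fubini.} For $a\in B_{\epsilon}$, Step 1 gives
$$|f(a)|^pe^{-\frac{p\alpha}{2}|a|^2}\leq C_1\epsilon^{1+\frac{2}{p}}\,\frac{1}{\widehat{w}\big(D(a,2R)\big)}\int_{D(a,2R)}|f|^pe^{-\frac{p\alpha}{2}|u|^2}\widehat{w}\,dA.$$
I then repeat the proof of Lemma \ref{A-inte} with $R$ replaced by $2R$. Integrate this inequality against $\widehat{w}(a)dA(a)$ over $B_{\epsilon}$ and interchange the order of integration. The inner integral $\int_{D(u,2R)}\widehat{w}(a)/\widehat{w}\big(D(a,2R)\big)\,dA(a)$ is bounded by a constant, by the analogue of \eqref{hatD} with $2R$ in place of $R$. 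The result is
$$\int_{B_{\epsilon}}|f|^pe^{-\frac{p\alpha}{2}|z|^2}\widehat{w}\,dA\leq C_2\,\epsilon^{1+\frac{2}{p}}\|f\|^p_{F^p_{\alpha,\widehat{w}}}.$$
Since $\epsilon<1$, we have $\epsilon^{1+2/p}\leq\epsilon$. So any $\epsilon_0\in(0,1)$ works, and $C_{\ref{B-inte}}=C_2$.

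\emph{Main obstacle.} The only delicate point is Step 1: keeping the constants uniform in $a$, and in particular uniform in $\lambda$ (which is harmless, since the bound ignores $E_{\lambda}(a)$). This requires the two-sided comparability of $\widehat{w}$ on disks of radius $2R$, which is exactly what Lemma \ref{w-esti} supplies for $A^{\res}_{\infty}$ weights. If the intended use of the lemma later needs a finer dependence, for instance exploiting the exponent $1+\frac{2}{p}$ through Lemma \ref{lip} and the set $E_{\lambda}(a)$ via Lemma \ref{m-ratio}, the crude bound above still gives the stated estimate. I would keep it as the proof and reserve the sharper considerations for the subsequent argument.
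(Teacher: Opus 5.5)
Your proof is correct, and it takes a different and shorter route than the paper. The paper splits $B_\epsilon$ into $B_\epsilon\cap A_\epsilon$ and $B_\epsilon\setminus A_\epsilon$ and disposes of the first piece by Lemma \ref{A-inte}. For the second piece, Fubini reduces everything to a lower bound on the Lebesgue measure $A\big(E_\lambda(z)\big)$. Using the Lipschitz estimate of Lemma \ref{lip}, together with $z\notin A_\epsilon$, $\lambda<2^{-p}K_0$ and $\epsilon<\epsilon_0$, the paper shows that $E_\lambda(z)$ contains a disk centered at $z$ of radius comparable to $\epsilon^{1/p}$. Then $A\big(E_\lambda(z)\big)\gtrsim\epsilon^{2/p}$ exactly cancels the extra $\epsilon^{2/p}$ in $\epsilon^{1+\frac{2}{p}}$. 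You never look inside $E_\lambda(a)$. You bound the $\widehat{w}$-average over $E_\lambda(a)$ by the supremum over $D(a,R)$, then apply Lemma \ref{sub-mean1} on $D(z,R)\subset D(a,2R)$ and the comparability of $\widehat{w}$ on $D(a,2R)$ from Lemma \ref{w-esti}. This places $B_\epsilon$ inside an $A_\epsilon$-type set at radius $2R$ with parameter $C_1\epsilon^{1+\frac{2}{p}}$, and the Fubini argument of Lemma \ref{A-inte}, using the radius-$2R$ analogue of \eqref{hatD}, finishes. Your constant is independent of $\lambda$, which is exactly what the sufficiency proof of Theorem \ref{suff} requires, since $\epsilon$ is fixed there before $\lambda$ is chosen. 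Your route buys economy: no Lemma \ref{lip}, no splitting off of $A_\epsilon$, any $\epsilon_0\in(0,1)$, and the stronger bound $C\epsilon^{1+\frac{2}{p}}$. It also shows that the exponent $1+\frac{2}{p}$ and the restriction $\lambda<2^{-p}K_0$ are not needed for this lemma. One could even put $\epsilon$ in place of $\epsilon^{1+\frac{2}{p}}$ in the definition of $B_\epsilon$, and Lemma \ref{m-ratio} would still drive the rest of the argument. The paper's route follows the Luecking/Wang--Zhao template. It yields the extra structural fact that $E_\lambda(z)$ contains a disk of radius $\gtrsim\epsilon^{1/p}$, but that fact is not used afterwards.
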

\begin{proof}
By Lemma \ref{A-inte}, it suffices to estimate the integral over $B_{\epsilon}\setminus A_{\epsilon}$. By the definition of $B_{\epsilon}$ and Fubini's theorem, for any $f\in F^p_{\alpha,\widehat{w}}$,
\begin{align*}
&\int_{B_{\epsilon}\setminus A_{\epsilon}}|f(z)|^pe^{-\frac{p\alpha}{2}|z|^2}\widehat{w}(z)dA(z)\\
&\ \leq\epsilon^{1+\frac{2}{p}}\int_{B_{\epsilon}\setminus A_{\epsilon}}B_{\lambda}f(z)\widehat{w}(z)dA(z)\\
&\ =\gamma_0\epsilon^{1+\frac{2}{p}}\int_{B_{\epsilon}\setminus A_{\epsilon}}\frac{\widehat{w}(z)}{\widehat{w}\big(E_{\lambda}(z)\big)}
    \int_{E_{\lambda}(z)}|f(u)|^pe^{-\frac{p\alpha}{2}|u|^2}\widehat{w}(u)dA(u)dA(z)\\
&\ =\gamma_0\epsilon^{1+\frac{2}{p}}\int_{\C}|f(u)|^pe^{-\frac{p\alpha}{2}|u|^2}\widehat{w}(u)
    \int_{B_{\epsilon}\setminus A_{\epsilon}}\frac{\chi_{E_{\lambda}(z)}(u)}{\widehat{w}\big(E_{\lambda}(z)\big)}\widehat{w}(z)dA(z)dA(u)\\
&\ \leq\gamma_0\epsilon^{1+\frac{2}{p}}\int_{\C}|f(u)|^pe^{-\frac{p\alpha}{2}|u|^2}\widehat{w}(u)
    \int_{B_{\epsilon}\setminus A_{\epsilon}}\frac{\chi_{D(z,R)}(u)}{\widehat{w}\big(E_{\lambda}(z)\big)}\widehat{w}(z)dA(z)dA(u)\\
&\ =\gamma_0\epsilon^{1+\frac{2}{p}}\int_{\C}|f(u)|^pe^{-\frac{p\alpha}{2}|u|^2}\widehat{w}(u)
    \int_{(B_{\epsilon}\setminus A_{\epsilon})\cap D(u,R)}\frac{\widehat{w}(z)}{\widehat{w}\big(E_{\lambda}(z)\big)}dA(z)dA(u).
\end{align*}
For any $z\in\C$, Lemma \ref{w-esti} indicates that
$$\widehat{w}\big(E_{\lambda}(z)\big)=\int_{E_{\lambda}(z)}\widehat{w}(\xi)dA(\xi)
\geq\frac{w\big(D(z,R)\big)}{\pi C_{\ref{w-esti}}}A\big(E_{\lambda}(z)\big)
\geq\frac{\widehat{w}(z)}{C_{\ref{w-esti}}^2}A\big(E_{\lambda}(z)\big).$$
Therefore,
\begin{align}\label{B-A}
&\int_{B_{\epsilon}\setminus A_{\epsilon}}|f(z)|^pe^{-\frac{p\alpha}{2}|z|^2}\widehat{w}(z)dA(z)\nonumber\\
&\qquad\leq C_{\ref{w-esti}}^2\gamma_0\epsilon^{1+\frac{2}{p}}\int_{\C}|f(u)|^pe^{-\frac{p\alpha}{2}|u|^2}\widehat{w}(u)
\int_{(B_{\epsilon}\setminus A_{\epsilon})\cap D(u,R)}\frac{dA(z)}{A\big(E_{\lambda}(z)\big)}dA(u).
\end{align}
It remains to estimate the measure $A\big(E_{\lambda}(z)\big)$ for $z\in B_{\epsilon}\setminus A_{\epsilon}$. Fix $z\notin A_{\epsilon}$. Then
\begin{equation}\label{outA}
|f(z)|e^{-\frac{\alpha}{2}|z|^2}\geq\epsilon^{1/p}\left(\widehat{|f|^p_w}(z)\right)^{1/p},
\end{equation}
where
$$\widehat{|f|^p_w}(z):=\frac{1}{\widehat{w}\big(D(z,R)\big)}\int_{D(z,R)}|f(\xi)|^pe^{-\frac{p\alpha}{2}|\xi|^2}\widehat{w}(\xi)dA(\xi).$$
By Lemma \ref{lip}, there exists $C_{\ref{lip}}=C_{\ref{lip}}(p,\alpha,R)>0$ such that for any $\zeta\in D(z,R/3)$,
\begin{align*}
&\left||f(\zeta)|e^{-\frac{\alpha}{2}|\zeta|^2}-|f(z)|e^{-\frac{\alpha}{2}|z|^2}\right|^p\\
&\ \leq C_{\ref{lip}}|\zeta-z|^p\int_{D(z,R)}|f(\xi)|^pe^{-\frac{p\alpha}{2}|\xi|^2}dA(\xi)\\
&\ \leq \pi R^2C_{\ref{lip}}C_{\ref{w-esti}}^2\frac{|\zeta-z|^p}{\widehat{w}\big(D(z,R)\big)}
    \int_{D(z,R)}|f(\xi)|^pe^{-\frac{p\alpha}{2}|\xi|^2}\widehat{w}(\xi)dA(\xi)\\
&\ =\pi R^2C_{\ref{lip}}C_{\ref{w-esti}}^2|\zeta-z|^p\widehat{|f|^p_w}(z),
\end{align*}
where in the second inequality we have used \eqref{hat-in}. Choose
$$\epsilon_0:=\min\left\{\frac{1}{2},\left(\frac{2R}{3}\right)^p\cdot\pi R^2C_{\ref{lip}}C_{\ref{w-esti}}^2\right\}$$
and fix $\epsilon\in(0,\epsilon_0)$. Then for any $\zeta\in\C$ with
$$|\zeta-z|<\left(\frac{\epsilon}{2^p\pi R^2C_{\ref{lip}}C_{\ref{w-esti}}^2}\right)^{1/p},$$
we have $\zeta\in D(z,R/3)$, and consequently,
\begin{align*}
\left||f(\zeta)|e^{-\frac{\alpha}{2}|\zeta|^2}-|f(z)|e^{-\frac{\alpha}{2}|z|^2}\right|
&\leq\left(\pi R^2C_{\ref{lip}}C_{\ref{w-esti}}^2\right)^{1/p}|\zeta-z|\left(\widehat{|f|^p_w}(z)\right)^{1/p}\\
&<\frac{\epsilon^{1/p}}{2}\left(\widehat{|f|^p_w}(z)\right)^{1/p},
\end{align*}
which together with \eqref{outA} implies that
\begin{align*}
|f(\zeta)|e^{-\frac{\alpha}{2}|\zeta|^2}
&\geq|f(z)|e^{-\frac{\alpha}{2}|z|^2}-\left||f(\zeta)|e^{-\frac{\alpha}{2}|\zeta|^2}-|f(z)|e^{-\frac{\alpha}{2}|z|^2}\right|\\
&>|f(z)|e^{-\frac{\alpha}{2}|z|^2}-\frac{\epsilon^{1/p}}{2}\left(\widehat{|f|^p_w}(z)\right)^{1/p}\\
&\geq\frac{1}{2}|f(z)|e^{-\frac{\alpha}{2}|z|^2}.
\end{align*}
Therefore,
$$\gamma_0|f(\zeta)|^pe^{-\frac{p\alpha}{2}|\zeta|^2}>2^{-p}K_0|f(z)|^pe^{-\frac{p\alpha}{2}|z|^2}
>\lambda|f(z)|^pe^{-\frac{p\alpha}{2}|z|^2}.$$
That is, $\zeta\in E_{\lambda}(z)$. Hence we obtain that
$$D\left(z,\left(\frac{\epsilon}{2^p\pi R^2C_{\ref{lip}}C_{\ref{w-esti}}^2}\right)^{1/p}\right)\subset E_{\lambda}(z),$$
which gives
$$A\big(E_{\lambda}(z)\big)\geq\pi\left(\frac{\epsilon}{2^p\pi R^2C_{\ref{lip}}C_{\ref{w-esti}}^2}\right)^{2/p}.$$
Inserting this into \eqref{B-A} establishes
$$\int_{B_{\epsilon}\setminus A_{\epsilon}}|f(z)|^pe^{-\frac{p\alpha}{2}|z|^2}\widehat{w}(z)dA(z)
\leq4\pi^{\frac{2}{p}}\gamma_0R^{2+\frac{4}{p}}C_{\ref{lip}}^{\frac{2}{p}}C_{\ref{w-esti}}^{2+\frac{4}{p}}
  \epsilon\|f\|_{F^p_{\alpha,\widehat{w}}}^p.$$
The proof is complete.
\end{proof}

We are now ready to prove the sufficiency of Theorem \ref{suff}.

\begin{proof}[Proof of Theorem \ref{suff}. Sufficiency]
Fix $\epsilon=\frac{1}{2}\min\left\{C_{\ref{B-inte}}^{-1},\epsilon_0\right\}$, where $\epsilon_0=\epsilon_0(p,\alpha,R,w)\in(0,1)$ is from Lemma \ref{B-inte}. Then for any $f\in F^p_{\alpha,\widehat{w}}$, Lemma \ref{B-inte} yields that
$$\int_{B_{\epsilon}}|f(z)|^pe^{-\frac{p\alpha}{2}|z|^2}\widehat{w}(z)dA(z)\leq\frac{1}{2}\|f\|_{F^p_{\alpha,\widehat{w}}}^p.$$
Consequently,
\begin{equation}\label{C-B-control}
\|f\|_{F^p_{\alpha,\widehat{w}}}^p\leq2\int_{\C\setminus B_{\epsilon}}|f(z)|^pe^{-\frac{p\alpha}{2}|z|^2}\widehat{w}(z)dA(z).
\end{equation}
For any $z\in\C\setminus B_{\epsilon}$, Lemma \ref{m-ratio} gives that
\begin{align*}
\frac{\widehat{w}\big(E_{\lambda}(z)\big)}{\widehat{w}\big(D(z,R)\big)}
&>\frac{\log\frac{1}{\lambda}}{\log\frac{B_{\lambda}f(z)}{|f(z)|^pe^{-\frac{p\alpha}{2}|z|^2}}+\log\frac{1}{\lambda}}\\
&>\frac{\log\frac{1}{\lambda}}{(1+2/p)\log\frac{1}{\epsilon}+\log\frac{1}{\lambda}},
\end{align*}
so we may choose $\lambda\in\big(0,\min\{1,2^{-p}K_0\}\big)$ small enough such that
$$\frac{\widehat{w}\big(E_{\lambda}(z)\big)}{\widehat{w}\big(D(z,R)\big)}\geq1-\frac{\delta}{2}.$$
This together with \eqref{hat-suff} implies that for $z\in\C\setminus B_{\epsilon}$,
\begin{align*}
\widehat{w}\big(G\cap E_{\lambda}(z)\big)
&=\widehat{w}\big(G\cap D(z,R)\big)-\widehat{w}\big(G\cap(D(a,R)\setminus E_{\lambda}(z))\big)\\
&\geq\widehat{w}\big(G\cap D(z,R)\big)-\widehat{w}\big(D(z,R)\setminus E_{\lambda}(z)\big)\\
&\geq\frac{\delta}{2}\widehat{w}\big(D(z,R)\big).
\end{align*}
Therefore, we use the definition of $E_{\lambda}(z)$, Fubini's theorem and \eqref{hatD} to establish that for $f\in F^p_{\alpha,\widehat{w}}$,
\begin{align*}
&\int_{\C\setminus B_{\epsilon}}|f(z)|^pe^{-\frac{p\alpha}{2}|z|^2}\widehat{w}(z)dA(z)\\
&\ \leq\frac{2}{\delta}\int_{\C\setminus B_{\epsilon}}|f(z)|^pe^{-\frac{p\alpha}{2}|z|^2}
    \frac{\widehat{w}\big(G\cap E_{\lambda}(z)\big)}{\widehat{w}\big(D(z,R)\big)}\widehat{w}(z)dA(z)\\
&\ =\frac{2}{\delta}\int_{\C\setminus B_{\epsilon}}|f(z)|^pe^{-\frac{p\alpha}{2}|z|^2}\frac{\widehat{w}(z)}{\widehat{w}\big(D(z,R)\big)}
    \int_{G\cap E_{\lambda}(z)}\widehat{w}(\xi)dA(\xi)dA(z)\\
&\ \leq\frac{2\gamma_0}{\lambda\delta}\int_{\C\setminus B_{\epsilon}}\frac{\widehat{w}(z)}{\widehat{w}\big(D(z,R)\big)}
    \int_{G\cap E_{\lambda}(z)}|f(\xi)|^pe^{-\frac{p\alpha}{2}|\xi|^2}\widehat{w}(\xi)dA(\xi)dA(z)\\
&\ \leq\frac{2\gamma_0}{\lambda\delta}\int_{\C\setminus B_{\epsilon}}\frac{\widehat{w}(z)}{\widehat{w}\big(D(z,R)\big)}
    \int_{G\cap D(z,R)}|f(\xi)|^pe^{-\frac{p\alpha}{2}|\xi|^2}\widehat{w}(\xi)dA(\xi)dA(z)\\
&\ =\frac{2\gamma_0}{\lambda\delta}\int_{G}|f(\xi)|^pe^{-\frac{p\alpha}{2}|\xi|^2}\widehat{w}(\xi)
    \int_{(\C\setminus B_{\epsilon})\cap D(\xi,R)}\frac{\widehat{w}(z)}{\widehat{w}\big(D(z,R)\big)}dA(z)dA(\xi)\\
&\ \leq\frac{2\gamma_0C_{\ref{w-esti}}^2}{\lambda\delta}\int_{G}|f(\xi)|^pe^{-\frac{p\alpha}{2}|\xi|^2}\widehat{w}(\xi)dA(\xi).
\end{align*}
Combining the above estimate with \eqref{C-B-control} yields that for any $f\in F^p_{\alpha,\widehat{w}}$,
$$\|f\|_{F^p_{\alpha,\widehat{w}}}^p
\leq\frac{4\gamma_0C_{\ref{w-esti}}^2}{\lambda\delta}\int_{G}|f(\xi)|^pe^{-\frac{p\alpha}{2}|\xi|^2}\widehat{w}(\xi)dA(\xi).$$
Hence we conclude that $G$ is a dominating set for $F^p_{\alpha,\widehat{w}}$ and the proof is complete.
\end{proof}

\section{Proof of Theorem \ref{main2}}\label{proof2}

The purpose of this section is to prove Theorem \ref{main2}. To this end, we establish the following difference quotient estimate for functions in weighted Fock spaces.

\begin{lemma}\label{difference}
Let $0<p,\alpha<\infty$ and $w\in A^{\res}_{\infty}$. Suppose that $\mu$ is a positive Borel measure on $\C$ such that $\left\|\widehat{\mu}_{w}\right\|_{L^{\infty}}<\infty$. Then there exists $C_{\ref{difference}}=C_{\ref{difference}}(p,\alpha,w)>0$ such that for any $f\in F^p_{\alpha,\widehat{w}}$ and any $r\in(0,1/3)$,
\begin{align*}
\int_{\C}\int_{\C}\frac{\chi_{D(z,r)}(\xi)}{\widehat{w}\big(D(\xi,r)\big)}
&\left||f(\xi)|e^{-\frac{\alpha}{2}|\xi|^2}-|f(z)|e^{-\frac{\alpha}{2}|z|^2}\right|^p\widehat{w}(z)dA(z)d\mu(\xi)\\
&\leq C_{\ref{difference}}\left\|\widehat{\mu}_w\right\|_{L^{\infty}}
    r^p\|f\|_{F^p_{\alpha,\widehat{w}}}^p.
\end{align*}
\end{lemma}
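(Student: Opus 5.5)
Fix $f\in F^p_{\alpha,\widehat{w}}$ and $r\in(0,1/3)$. The plan has two steps: first apply Lemma \ref{lip} pointwise inside the double integral, then convert the resulting $\mu$-integral into a Lebesgue integral using $\|\widehat{\mu}_w\|_{L^\infty}$, and finish with Fubini. Throughout, $R=1$ is the radius used in Lemma \ref{lip}, so the admissible range $|\xi-z|<1/3$ holds automatically when $\xi\in D(z,r)$.

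\emph{Step 1 (pointwise Lipschitz bound).} For $\xi\in D(z,r)$, Lemma \ref{lip} with $R=1$, centered at $\xi$, gives
$$\left||f(\xi)|e^{-\frac{\alpha}{2}|\xi|^2}-|f(z)|e^{-\frac{\alpha}{2}|z|^2}\right|^p\le C_{\ref{lip}}r^p\int_{D(\xi,1)}|f(u)|^pe^{-\frac{p\alpha}{2}|u|^2}dA(u).$$
The inner $dA(z)$ integral runs over $z\in D(\xi,r)$ and produces the factor $\widehat{w}(D(\xi,r))$, which cancels the denominator. Hence the left-hand side of the lemma is at most
$$C_{\ref{lip}}r^p\int_{\C}\int_{D(\xi,1)}|f(u)|^pe^{-\frac{p\alpha}{2}|u|^2}dA(u)\,d\mu(\xi).$$

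\emph{Step 2 (Fubini in $\mu$).} By Fubini the last expression equals
$$C_{\ref{lip}}r^p\int_{\C}|f(u)|^pe^{-\frac{p\alpha}{2}|u|^2}\mu\big(D(u,1)\big)dA(u).$$
The definition of $\widehat{\mu}_w$ gives $\mu(D(u,1))\le\|\widehat{\mu}_w\|_{L^\infty}w(D(u,1))$, and $w(D(u,1))=\pi\widehat{w}(u)$. The expression is therefore at most
$$\pi C_{\ref{lip}}\|\widehat{\mu}_w\|_{L^\infty}r^p\|f\|^p_{F^p_{\alpha,\widehat{w}}}.$$

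The one delicate point is that $\|\widehat{\mu}_w\|_{L^\infty}$ is an essential supremum, while the bound is needed at every $u$. I would handle this by continuity from the left of the relevant quantities in $u$, or more robustly by covering $D(u,1)\subset\bigcup D(v_j,1)$ with finitely many points $v_j$ that are Lebesgue-typical. Together with Lemma \ref{w-esti}, this gives $\mu(D(u,1))\lesssim\|\widehat{\mu}_w\|_{L^\infty}\,w(D(u,1))$ for all $u$, at the cost of a constant depending only on $w$. I expect this measure-theoretic bookkeeping to be the only real obstacle; the resulting constant $C_{\ref{difference}}$ depends only on $p,\alpha,w$.
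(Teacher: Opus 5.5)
Your argument is correct. It follows the same skeleton as the paper: Lemma \ref{lip} with $R=1$, cancellation of $\widehat{w}(D(\xi,r))$ by the $dA(z)$ integral, Fubini, and the bound by $\|\widehat{\mu}_w\|_{L^\infty}$. You place the weight differently, and this makes your version shorter.

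The paper converts the unweighted Lipschitz bound into a $\widehat{w}$-weighted one before Fubini. It uses Lemma \ref{w-esti} to get $\widehat{w}(u)\geq w(D(\xi,1))/(\pi C_{\ref{w-esti}})$ for $u\in D(\xi,1)$. After Fubini it needs Lemma \ref{w-esti} again, to compare $w(D(\xi,1))$ with $w(D(u,1))$ for $\xi\in D(u,1)$, and it ends with the constant $\pi C_{\ref{lip}}C_{\ref{w-esti}}^2$.

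You keep the unweighted integral and let Fubini produce $\mu(D(u,1))$ as the weight. You then use the exact identity $\mu(D(u,1))=\widehat{\mu}_w(u)\,w(D(u,1))=\pi\,\widehat{\mu}_w(u)\,\widehat{w}(u)$. This avoids Lemma \ref{w-esti} entirely and gives the constant $\pi C_{\ref{lip}}$, which depends only on $p$ and $\alpha$.

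The ``delicate point'' you flag is not an obstacle. The inequality $\widehat{\mu}_w(u)\leq\|\widehat{\mu}_w\|_{L^\infty}$ is only used inside an integral with respect to $dA(u)$, so it is needed only for almost every $u$. That is exactly how the paper uses it in its last step. In fact it holds for every $u$: $u\mapsto\mu(D(u,1))$ is lower semicontinuous by Fatou, and $u\mapsto w(D(u,1))$ is continuous and positive. Hence $\widehat{\mu}_w$ is lower semicontinuous, and its supremum equals its essential supremum. Your covering argument would also work, but it only adds a $w$-dependent constant you do not need.

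One piece of bookkeeping is worth stating explicitly. The assumption $\|\widehat{\mu}_w\|_{L^\infty}<\infty$ makes $\mu$ finite on every unit disk, hence locally finite and $\sigma$-finite, so Tonelli's theorem applies in your Step 2.
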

\begin{proof}
Fix $f\in F^p_{\alpha,\widehat{w}}$. Applying Lemmas \ref{lip} and \ref{w-esti} with $R=1$, we obtain that there exist $C_{\ref{lip}}=C_{\ref{lip}}(p,\alpha)>0$ and $C_{\ref{w-esti}}=C_{\ref{w-esti}}(w)>0$ such that for any $z,\xi\in\C$ with $|z-\xi|<r<1/3$,
\begin{align*}
&\left||f(\xi)|e^{-\frac{\alpha}{2}|\xi|^2}-|f(z)|e^{-\frac{\alpha}{2}|z|^2}\right|^p\\
&\ \leq C_{\ref{lip}}|z-\xi|^p\int_{D(\xi,1)}|f(u)|^pe^{-\frac{p\alpha}{2}|u|^2}dA(u)\\
&\ \leq \pi C_{\ref{lip}}C_{\ref{w-esti}}\frac{r^p}{w\big(D(\xi,1)\big)}
    \int_{D(\xi,1)}|f(u)|^pe^{-\frac{p\alpha}{2}|u|^2}\widehat{w}(u)dA(u).
\end{align*}
Multiply this by ${\chi_{D(z,r)}(\xi)}/{\widehat{w}\big(D(\xi,r)\big)}$ and integrate with respect to $\widehat{w}(z)dA(z)$ to establish
\begin{align*}
&\int_{\C}\frac{\chi_{D(z,r)}(\xi)}{\widehat{w}\big(D(\xi,r)\big)}
\left||f(\xi)|e^{-\frac{\alpha}{2}|\xi|^2}-|f(z)|e^{-\frac{\alpha}{2}|z|^2}\right|^p\widehat{w}(z)dA(z)\\
&\ \leq\pi C_{\ref{lip}}C_{\ref{w-esti}}\frac{r^p}{w\big(D(\xi,1)\big)}
\int_{D(\xi,1)}|f(u)|^pe^{-\frac{p\alpha}{2}|u|^2}\widehat{w}(u)dA(u),
\end{align*}
which, combined with Fubini's theorem and Lemma \ref{w-esti}, gives that
\begin{align*}
&\int_{\C}\int_{\C}\frac{\chi_{D(z,r)}(\xi)}{\widehat{w}\big(D(\xi,r)\big)}
    \left||f(\xi)|e^{-\frac{\alpha}{2}|\xi|^2}-|f(z)|e^{-\frac{\alpha}{2}|z|^2}\right|^p\widehat{w}(z)dA(z)d\mu(\xi)\\
&\ \leq\pi C_{\ref{lip}}C_{\ref{w-esti}}r^p\int_{\C}\frac{1}{w\big(D(\xi,1)\big)}
    \int_{D(\xi,1)}|f(u)|^pe^{-\frac{p\alpha}{2}|u|^2}\widehat{w}(u)dA(u)d\mu(\xi)\\
&\ =\pi C_{\ref{lip}}C_{\ref{w-esti}}
    r^p\int_{\C}|f(u)|^pe^{-\frac{p\alpha}{2}|u|^2}\widehat{w}(u)
    \int_{D(u,1)}\frac{d\mu(\xi)}{w\big(D(\xi,1)\big)}dA(u)\\
&\ \leq\pi C_{\ref{lip}}C_{\ref{w-esti}}^2
    r^p\int_{\C}|f(u)|^pe^{-\frac{p\alpha}{2}|u|^2}\widehat{w}(u)
    \frac{\mu\big(D(u,1)\big)}{w\big(D(u,1)\big)}dA(u)\\
&\ \leq\pi C_{\ref{lip}}C_{\ref{w-esti}}^2\left\|\widehat{\mu}_w\right\|_{L^{\infty}}
    r^p\|f\|_{F^p_{\alpha,\widehat{w}}}^p.
\end{align*}
The proof is complete.
\end{proof}

We are now ready to prove Theorem \ref{main2}.

\begin{proof}[Proof of Theorem \ref{main2}]
In the case $p\geq1$, we use Lemma \ref{difference}, the triangle inequality and Fubini's theorem to obtain that for any $f\in F^p_{\alpha,\widehat{w}}$ and $r\in(0,1/3)$,
\begin{align*}
&\left(C_{\ref{difference}}\left\|\widehat{\mu}_w\right\|_{L^{\infty}}r^p\|f\|_{F^p_{\alpha,\widehat{w}}}^p\right)^{1/p}\\
&\ \geq\left(\int_{\C}\int_{\C}\frac{\chi_{D(z,r)}(\xi)}{\widehat{w}\big(D(\xi,r)\big)}
    \left||f(\xi)|e^{-\frac{\alpha}{2}|\xi|^2}-|f(z)|e^{-\frac{\alpha}{2}|z|^2}\right|^p\widehat{w}(z)dA(z)d\mu(\xi)\right)^{1/p}\\
&\ \geq\left(\int_{\C}\int_{\C}\frac{\chi_{D(z,r)}(\xi)}{\widehat{w}\big(D(\xi,r)\big)}
    |f(z)|^pe^{-\frac{p\alpha}{2}|z|^2}\widehat{w}(z)dA(z)d\mu(\xi)\right)^{1/p}\\
&\ \qquad\qquad -\left(\int_{\C}\int_{\C}\frac{\chi_{D(z,r)}(\xi)}{\widehat{w}\big(D(\xi,r)\big)}
    |f(\xi)|^pe^{-\frac{p\alpha}{2}|\xi|^2}\widehat{w}(z)dA(z)d\mu(\xi)\right)^{1/p}\\
&\ =\left(\int_{\C}|f(z)|^pe^{-\frac{p\alpha}{2}|z|^2}\widehat{w}(z)
    \int_{D(z,r)}\frac{d\mu(\xi)}{\widehat{w}\big(D(\xi,r)\big)}dA(z)\right)^{1/p}\\
&\ \qquad\qquad-\left(\int_{\C}|f(\xi)|^pe^{-\frac{p\alpha}{2}|\xi|^2}d\mu(\xi)\right)^{1/p}.
\end{align*}
Note that for $\xi,\zeta\in D(z,r)$ and $u\in D(\xi,r)$, we have $|u-\zeta|<3r<1$, and so Lemma \ref{w-esti} with $R=1$ gives that there exists $C_{\ref{w-esti}}=C_{\ref{w-esti}}(w)>0$ such that for any $\xi\in D(z,r)$,
\begin{align*}
\widehat{w}\big(D(\xi,r)\big)&=\frac{1}{\pi r^2}\int_{D(z,r)}\int_{D(\xi,r)}\widehat{w}(u)dA(u)dA(\zeta)\\
&\leq\frac{C_{\ref{w-esti}}}{\pi r^2}\int_{D(z,r)}\int_{D(\xi,r)}\widehat{w}(\zeta)dA(u)dA(\zeta)\\
&=C_{\ref{w-esti}}\widehat{w}\big(D(z,r)\big).
\end{align*}
Therefore,
\begin{align*}
&\left(C_{\ref{difference}}\left\|\widehat{\mu}_w\right\|_{L^{\infty}}r^p\|f\|_{F^p_{\alpha,\widehat{w}}}^p\right)^{1/p}\\
&\ \geq\left(\frac{1}{C_{\ref{w-esti}}}\int_{\C}|f(z)|^pe^{-\frac{p\alpha}{2}|z|^2}\widehat{w}(z)
    \frac{\mu\big(D(z,r)\big)}{\widehat{w}\big(D(z,r)\big)}dA(z)\right)^{1/p}\\
&\ \qquad\qquad-\left(\int_{\C}|f(\xi)|^pe^{-\frac{p\alpha}{2}|\xi|^2}d\mu(\xi)\right)^{1/p}\\
&\ >\left(\frac{\epsilon\|\widehat{\mu}_{w}\|_{L^{\infty}}}{C_{\ref{w-esti}}}
    \int_{G}|f(z)|^pe^{-\frac{p\alpha}{2}|z|^2}\widehat{w}(z)dA(z)\right)^{1/p}\\
&\ \qquad\qquad-\left(\int_{\C}|f(\xi)|^pe^{-\frac{p\alpha}{2}|\xi|^2}d\mu(\xi)\right)^{1/p}.
\end{align*}
By Theorem \ref{suff}, there exists $C=C(p,\alpha,R,\delta,w)>0$ such that
$$\int_{G}|f(z)|^pe^{-\frac{p\alpha}{2}|z|^2}\widehat{w}(z)dA(z)\geq C\|f\|_{F^p_{\alpha,\widehat{w}}}^p.$$
We finally conclude that
\begin{align*}
&\left(C_{\ref{difference}}\left\|\widehat{\mu}_w\right\|_{L^{\infty}}r^p\|f\|_{F^p_{\alpha,\widehat{w}}}^p\right)^{1/p}\\
&\quad \geq\left(\frac{C\epsilon\|\widehat{\mu}_{w}\|_{L^{\infty}}}{C_{\ref{w-esti}}}\|f\|_{F^p_{\alpha,\widehat{w}}}^p\right)^{1/p}
-\left(\int_{\C}|f(\xi)|^pe^{-\frac{p\alpha}{2}|\xi|^2}d\mu(\xi)\right)^{1/p}.
\end{align*}
Choosing $r_{\ref{main2}}=\min\left\{\frac{1}{3},\left(\frac{C\epsilon}{C_{\ref{w-esti}}C_{\ref{difference}}}\right)^{1/p}\right\}$, then we obtain that whenever $r\in(0,r_{\ref{main2}})$,
$$\left(\int_{\C}|f(\xi)|^pe^{-\frac{p\alpha}{2}|\xi|^2}d\mu(\xi)\right)^{1/p}
\geq\left(\left(\frac{C\epsilon}{C_{\ref{w-esti}}}\right)^{1/p}-C_{\ref{difference}}^{1/p}r\right)
\left\|\widehat{\mu}_w\right\|_{L^{\infty}}^{1/p}\|f\|_{F^p_{\alpha,\widehat{w}}}.$$
This together with Lemma \ref{hat-norm} yields that $\mu$ is a reverse Carleson measure for $F^p_{\alpha,w}$. In the case $0<p<1$ we use the inequality
$$|a-b|^p\geq|a|^p-|b|^p,\quad a,b\in\C$$
instead of the triangle inequality and the result follows from exactly the same method.
\end{proof}

\section{Applications}\label{app}

In this section, we give some applications of our main results.

\subsection{Invertibility of Toeplitz operators}

Given $\varphi\in L^{\infty}(\C)$, the Toeplitz operator $T_{\varphi}$ is defined by
$$T_{\varphi}f:=P(\varphi f),\quad f\in F^2_{\alpha,w},$$
where $P$ is the orthogonal projection from $L^2_{\alpha,w}$ onto $F^2_{\alpha,w}$. Recently, the author \cite{Ch25-1} characterized the bounded, compact and Schatten class Toeplitz operators with positive measure symbols on the weighted Fock spaces $F^2_{\alpha,w}$ with $w\in A^{\res}_{\infty}$. Here we can use Theorem \ref{main1} to characterize the invertibility of Toeplitz operators $T_{\varphi}$ with positive bounded symbols acting on $F^2_{\alpha,w}$ induced by Muckenhoupt $A_{\infty}$ weights. The proof is the same as that of \cite[Corollary 3]{Lu81} and so we omit it; see also \cite[Theorem 1.1]{WZ}.

\begin{corollary}
Let $\alpha>0$ and $w\in A_{\infty}$, and let $\varphi$ be a non-negative function in $L^{\infty}(\C)$. Then the following statements are equivalent:
\begin{enumerate}
	\item [(a)] $T_{\varphi}$ is invertible on $F^2_{\alpha,w}$;
	\item [(b)] there exists $C>0$ such that for any $f\in F^2_{\alpha,w}$,
	$$\int_{\C}|f(z)|^2\varphi(z)^2e^{-\alpha|z|^2}w(z)dA(z)\geq C\|f\|_{F^2_{\alpha,w}}^2;$$
	\item [(c)] there exist $R,\epsilon>0$ and $\delta\in(0,1)$ such that the set $G_\epsilon=\{z\in\C:\varphi(z)\geq \epsilon\}$ satisfies
	$$w\big(G_{\epsilon}\cap D(a,R)\big)>\delta w\big(D(a,R)\big),\quad \forall a\in\C.$$
\end{enumerate}
\end{corollary}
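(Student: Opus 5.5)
We prove the cycle (a)$\Rightarrow$(b)$\Rightarrow$(c)$\Rightarrow$(a), following Luecking's scheme. Throughout we use that $T_\varphi$ is bounded on $F^2_{\alpha,w}$ with $\|T_\varphi\|\le\|\varphi\|_{L^\infty}$. We also use that $T_\varphi$ is self-adjoint and non-negative:
$$\langle T_\varphi f,f\rangle=\int_{\C}\varphi|f|^2e^{-\alpha|z|^2}w\,dA\ge0,$$
where the pairing is the $L^2_{\alpha,w}$ inner product and $P$ is the orthogonal projection.

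For (a)$\Rightarrow$(b): invertibility of a non-negative self-adjoint operator gives $\langle T_\varphi f,f\rangle\ge c\|f\|^2$ with $c=\|T_\varphi^{-1}\|^{-1}$. Since $\varphi\le\|\varphi\|_\infty$, we have $\varphi^2\ge\varphi\cdot\varphi$, and the Cauchy--Schwarz inequality gives
$$\Big(\int\varphi|f|^2e^{-\alpha|z|^2}w\Big)^2\le\|f\|^2\int\varphi^2|f|^2e^{-\alpha|z|^2}w.$$
Hence $\int\varphi^2|f|^2e^{-\alpha|z|^2}w\ge c^2\|f\|^2$, which is (b).

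For (b)$\Rightarrow$(c): fix $\epsilon>0$ and split $\C=G_\epsilon\cup(\C\setminus G_\epsilon)$. Then
$$C\|f\|^2\le\|\varphi\|_\infty^2\int_{G_\epsilon}|f|^2e^{-\alpha|z|^2}w+\epsilon^2\|f\|^2.$$
Choosing $\epsilon^2=C/2$ yields $\int_{G_\epsilon}|f|^2e^{-\alpha|z|^2}w\ge \frac{C}{2\|\varphi\|_\infty^2}\|f\|^2$. So $G_\epsilon$ is a dominating set for $F^2_{\alpha,w}$, and Theorem \ref{main1} (with $p=2$, $w\in A_\infty$) gives the density condition in (c).

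For (c)$\Rightarrow$(a): by Theorem \ref{main1}, $G_\epsilon$ is a dominating set, so
$$\langle T_\varphi f,f\rangle\ge\epsilon\int_{G_\epsilon}|f|^2e^{-\alpha|z|^2}w\ge c\|f\|^2.$$
A bounded self-adjoint operator bounded below in this sense is injective with closed range. Its range is dense because its kernel, which equals the orthogonal complement of the range, is trivial. Therefore $T_\varphi$ is invertible.

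The only nontrivial input is Theorem \ref{main1}. The main technical point to check is the self-adjointness of $T_\varphi$ on $F^2_{\alpha,w}$. This holds because $P$ is the orthogonal projection in $L^2_{\alpha,w}$, so $\langle P(\varphi f),g\rangle=\langle\varphi f,g\rangle$ for $g\in F^2_{\alpha,w}$, which is symmetric in $f,g$ when $\varphi$ is real.
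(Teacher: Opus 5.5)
Your proof is correct and follows essentially the route the paper intends (the paper omits the proof and defers to Luecking's argument). In that route, positivity and self-adjointness of $T_\varphi$ reduce invertibility to a reverse Carleson inequality, splitting the integral over the level set $G_\epsilon$ shows it is a dominating set, and Theorem \ref{main1} turns this into the density condition (c). Two small remarks: the aside ``$\varphi^2\ge\varphi\cdot\varphi$'' is vacuous and can be dropped, and (a)$\Rightarrow$(b) also follows directly from $\|T_\varphi f\|\le\|\varphi f\|_{L^2_{\alpha,w}}$ without using self-adjointness.
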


\subsection{Closed range Volterra operators}

Given $g\in\h(\C)$, the Volterra type integration operator $J_g$ is defined by
$$J_gf(z):=\int_0^zf(\xi)g'(\xi)d\xi,\quad f\in\h(\C).$$
It was proved in \cite[Theorem 3.1]{Xu} that for $0<p,\alpha<\infty$ and $w\in A^{\res}_{\infty}$, $J_g$ is bounded on $F^p_{\alpha,w}$ if and only if $g(z)=c_2z^2+c_1z+c_0$ for some $c_0,c_1,c_2\in\C$. We here determine when $J_g$ has closed range on $F^p_{\alpha,w}$. To this end, we need the following direct consequence of Theorem \ref{suff}. The proof is omitted.

\begin{corollary}\label{multiplier}
Let $\varphi$ be a non-negative function in $L^{\infty}(\C)$ and write $G_{\epsilon}:=\{z\in\C:\varphi(z)\geq \epsilon\}$ for $\epsilon>0$. Let $0<p,\alpha<\infty$ and $w\in A^{\res}_{\infty}$. Then there exists $C>0$ such that
$$\int_{\C}|f(z)|^p\varphi(z)e^{-\frac{p\alpha}{2}|z|^2}\widehat{w}(z)dA(z)\geq C\|f\|_{F^p_{\alpha,\widehat{w}}}^p$$
holds for all $f\in F^p_{\alpha,\widehat{w}}$ if and only if there exist $R,\epsilon>0$ and $\delta\in(0,1)$ such that
\begin{equation}\label{G-t}
	\widehat{w}\big(G_{\epsilon}\cap D(a,R)\big)>\delta\widehat{w}\big(D(a,R)\big),\quad \forall a\in\C.
\end{equation}
\end{corollary}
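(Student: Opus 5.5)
For the sufficiency direction I would reduce to Theorem \ref{suff}. Suppose \eqref{G-t} holds for some $R,\epsilon>0$ and $\delta\in(0,1)$. Then Theorem \ref{suff}, applied with the set $G_\epsilon$, shows that $G_\epsilon$ is a dominating set for $F^p_{\alpha,\widehat{w}}$. So there is $C'>0$ with
$$\|f\|^p_{F^p_{\alpha,\widehat{w}}}\leq C'\int_{G_\epsilon}|f|^pe^{-\frac{p\alpha}{2}|z|^2}\widehat{w}\,dA.$$
On $G_\epsilon$ we have $\varphi\geq\epsilon$, so the right-hand side is at most
$$\frac{C'}{\epsilon}\int_{\C}|f|^p\varphi e^{-\frac{p\alpha}{2}|z|^2}\widehat{w}\,dA.$$
This gives the inequality with $C=\epsilon/C'$.

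For necessity, assume the lower bound holds with constant $C$. I will choose $\epsilon$ small so that the part of the integral coming from $\C\setminus G_\epsilon$ is negligible. Split the integral over $G_\epsilon$ and its complement. On $\C\setminus G_\epsilon$ we have $\varphi<\epsilon$, so
$$C\|f\|^p\leq \|\varphi\|_{L^\infty}\int_{G_\epsilon}|f|^pe^{-\frac{p\alpha}{2}|z|^2}\widehat{w}\,dA+\epsilon\|f\|^p_{F^p_{\alpha,\widehat{w}}}.$$
Take $\epsilon=C/2$. If $\varphi\equiv0$ a.e.\ the hypothesis fails, so $\|\varphi\|_{L^\infty}>0$. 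Rearranging then gives
$$\|f\|^p\leq\frac{2\|\varphi\|_{L^\infty}}{C}\int_{G_\epsilon}|f|^pe^{-\frac{p\alpha}{2}|z|^2}\widehat{w}\,dA.$$
Hence $G_\epsilon$ is a dominating set for $F^p_{\alpha,\widehat{w}}$.

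The necessity part of Theorem \ref{suff} then yields $R>0$ and $\delta\in(0,1)$ with \eqref{G-t}. That part rests on Proposition \ref{necessity} applied to $\widehat{w}\in A^{\res}_\infty$.

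I expect no real obstacle. The only points to check are that $\|f\|_{F^p_{\alpha,\widehat{w}}}$ is finite, which holds for $f\in F^p_{\alpha,\widehat{w}}$, so the term $\epsilon\|f\|^p$ can be absorbed. One also needs the observation above that $\varphi\not\equiv0$.
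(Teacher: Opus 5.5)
Your proposal is correct and is the argument the paper intends: the paper omits this proof, calling it a direct consequence of Theorem \ref{suff}, and your argument spells that out. For sufficiency you use $\varphi\geq\epsilon$ on $G_\epsilon$. For necessity you absorb the $\C\setminus G_\epsilon$ contribution with $\epsilon=C/2$ to make $G_\epsilon$ dominating, then apply the necessity half of Theorem \ref{suff} (Proposition \ref{necessity} for $\widehat{w}\in A^{\res}_\infty$).
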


We are now ready to characterize the non-trivial Volterra operators $J_g$ that have closed range on $F^p_{\alpha,w}$.

\begin{theorem}
Let $0<p,\alpha<\infty$ and $w\in A^{\res}_{\infty}$. Suppose that $g(z)=c_2z^2+c_1z+c_0$ for some $c_0,c_1,c_2\in\C$ with $|c_1|+|c_2|>0$. Then the following conditions are equivalent:
\begin{enumerate}
	\item [(a)] $J_g$ has closed range on $F^p_{\alpha,w}$;
	\item [(b)] $J_g$ is bounded below on $F^p_{\alpha,w}$;
	\item [(c)] $c_2\neq0$.
\end{enumerate}
\end{theorem}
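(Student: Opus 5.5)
The plan is to prove (b)$\Rightarrow$(a), (a)$\Rightarrow$(b), (c)$\Rightarrow$(b) and (b)$\Rightarrow$(c). The key tool is a Littlewood--Paley type description of $F^p_{\alpha,w}$. Namely, for $f\in\h(\C)$ with $f(0)=0$,
$$\|f\|^p_{F^p_{\alpha,w}}\asymp\int_{\C}\frac{|f'(z)|^p}{(1+|z|)^p}e^{-\frac{p\alpha}{2}|z|^2}\widehat{w}(z)dA(z).$$
For $w\in A^{\res}_{\infty}$ this follows from Lemma \ref{hat-norm} together with the corresponding estimate for $\widehat{w}$ (cf. \cite{Xu}, where boundedness of $J_g$ is proved this way). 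Since $(J_gf)'=fg'$ and $J_gf(0)=0$, we get
$$\|J_gf\|^p_{F^p_{\alpha,w}}\asymp\int_{\C}|f(z)|^p\frac{|g'(z)|^p}{(1+|z|)^p}e^{-\frac{p\alpha}{2}|z|^2}\widehat{w}(z)dA(z).$$
Put $\varphi(z):=|g'(z)|^p(1+|z|)^{-p}=|2c_2z+c_1|^p(1+|z|)^{-p}$. This is a non-negative bounded function, which is consistent with $J_g$ being bounded.

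(b)$\Rightarrow$(a) is standard: a bounded-below operator has closed range. This holds even in the quasi-Banach case $p<1$, since $F^p_{\alpha,w}$ is complete as a closed subspace of $L^p_{\alpha,w}$. For (a)$\Rightarrow$(b), first observe that $J_g$ is injective: if $J_gf=0$ then $fg'\equiv0$, and $g'\not\equiv0$ forces $f=0$. An injective bounded operator with closed range between complete metrizable spaces is bounded below by the open mapping theorem, which is valid for $F$-spaces. So (a) and (b) are equivalent.

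For (c)$\Rightarrow$(b): if $c_2\neq0$, then $\varphi(z)\to(2|c_2|)^p$ as $|z|\to\infty$. Hence $G_\epsilon=\{\varphi\ge\epsilon\}$ with $\epsilon=|c_2|^p$ contains $\C\setminus D(0,M)$ for some $M$. Given any $R$, for $a$ with $|a|$ large we have $G_\epsilon\cap D(a,R)=D(a,R)$. For $a$ near the origin, take $R$ larger than $2M$ (say $R=4M+4$); then $D(a,R)\setminus D(0,M)$ contains a disk of radius comparable to $R$, and Lemma \ref{w-esti} shows it carries a fixed proportion of $\widehat{w}(D(a,R))$. So \eqref{G-t} holds. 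Corollary \ref{multiplier} then gives $\|J_gf\|^p\gtrsim\|f\|^p_{F^p_{\alpha,\widehat{w}}}$, and Lemma \ref{hat-norm} converts this to the $F^p_{\alpha,w}$ norm.

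For (b)$\Rightarrow$(c): suppose $c_2=0$, so $c_1\neq0$ and $\varphi(z)=|c_1|^p(1+|z|)^{-p}\to0$. For any $\epsilon>0$, the set $G_\epsilon$ is bounded, so \eqref{G-t} fails for $|a|$ large. Corollary \ref{multiplier} then shows the lower estimate fails, i.e. $J_g$ is not bounded below. Alternatively, one can test directly on the normalized kernels $f_a$ from the proof of Proposition \ref{necessity}. Lemma \ref{limit0} concentrates their mass near $a$, which gives $\|J_gf_a\|^p\lesssim(1+|a|)^{-p}+o(1)\to0$ while $\|f_a\|\asymp1$.

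The main obstacle is the Littlewood--Paley equivalence for $F^p_{\alpha,w}$ with $w\in A^{\res}_\infty$ and general $0<p<\infty$. I would cite it from \cite{Xu}, or derive it via Lemma \ref{hat-norm} from the known proof for standard Fock spaces, since $\widehat{w}$ is essentially constant on unit disks (Lemma \ref{w-esti}). This allows the local Cauchy estimates and the usual $|z|\gg1$ comparison of $f$ and $f'/z$ to be carried out with the weight frozen. The remaining steps are direct.
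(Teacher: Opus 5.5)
Your proposal is correct and follows essentially the same route as the paper: injectivity plus the open mapping/closed graph theorem gives (a)$\Leftrightarrow$(b), and the Littlewood--Paley description reduces bounded-belowness to a lower estimate with $\varphi=|2c_2z+c_1|^p(1+|z|)^{-p}$, which Corollary \ref{multiplier} converts into condition \eqref{G-t} for $G_\epsilon$ (the paper cites \cite[Theorem 1.1]{CFP} for the Littlewood--Paley step rather than \cite{Xu}). Your check of \eqref{G-t} when $c_2\neq0$ matches the paper's: a disk of radius comparable to $R$ is pushed outward inside $D(a,R)\setminus D(0,M)$ and Lemma \ref{w-esti} is applied. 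Your argument that \eqref{G-t} fails when $c_2=0$, because $G_\epsilon$ is bounded, is also the paper's.
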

\begin{proof}
Since $g$ is not a constant, $J_g$ is one-to-one on $\h(\C)$. The equivalence between (a) and (b) is a consequence of the closed graph theorem; see for instance \cite[Theorem 3.2]{An11}. We now establish the equivalence between (b) and (c). By Lemma \ref{hat-norm}, $J_g$ is bounded below on $F^p_{\alpha,w}$ if and only if there exists $C>0$ such that for any $f\in F^p_{\alpha,w}=F^p_{\alpha,\widehat{w}}$,
$$\|J_gf\|_{F^p_{\alpha,\widehat{w}}}^p\geq C\|f\|_{F^p_{\alpha,\widehat{w}}}^p,$$
which, due to \cite[Theorem 1.1]{CFP}, is equivalent to that there exists $C'>0$ such that
$$\int_{\C}|f(z)|^p\left(\frac{|2c_2z+c_1|}{1+|z|}\right)^pe^{-\frac{p\alpha}{2}|z|^2}\widehat{w}(z)dA(z)
\geq C'\|f\|_{F^p_{\alpha,\widehat{w}}}^p.$$
For $\epsilon>0$, write $G_{\epsilon}:=\{z\in\C:|2c_2z+c_1|\geq \epsilon(1+|z|)\}$. Then by Corollary \ref{multiplier}, $J_g$ is bounded below on $F^p_{\alpha,w}$ if and only if there exist $R,\epsilon>0$ and $\delta\in(0,1)$ such that \eqref{G-t} holds. If $c_2=0$, then it is easy to verify that for any $R,\epsilon>0$,
$$G_{\epsilon}\cap D(a,R)=\emptyset$$
whenever $|a|>R+\frac{|c_1|}{\epsilon}$. Consequently, \eqref{G-t} fails for any $R,\epsilon>0$ and $\delta\in(0,1)$. If $c_2\neq0$, choose $\epsilon=|c_2|$ and $R=3\left(1+|c_1|/|c_2|\right)$. Then, noting that for any $z\in\C$ with $|z|\geq1+|c_1|/|c_2|$,
$$|2c_2z+c_1|\geq2|c_2||z|-|c_1|\geq|c_2|(1+|z|),$$
we have
$$\C\setminus D\left(0,1+\frac{|c_1|}{|c_2|}\right)\subset G_{\epsilon}.$$
For any $a\in\C$, write
\begin{equation*}
	\tilde{a}=
	\begin{cases}
		a+2\left(1+\frac{|c_1|}{|c_2|}\right)\frac{a}{|a|},&   \text{$a\neq0$},\\
		2\left(1+\frac{|c_1|}{|c_2|}\right),&   \text{$a=0$}.
	\end{cases}
\end{equation*}
Then for $\xi\in D(\tilde{a},1+|c_1|/|c_2|)$,
$$|\xi-a|\leq|\xi-\tilde{a}|+|\tilde{a}-a|<3\left(1+\frac{|c_1|}{|c_2|}\right),$$
and
$$|\xi|\geq|\tilde{a}|-|\xi-\tilde{a}|>1+\frac{|c_1|}{|c_2|}.$$
That is,
$$D\left(\tilde{a},1+\frac{|c_1|}{|c_2|}\right)\subset D(a,R)\setminus D\left(0,1+\frac{|c_1|}{|c_2|}\right)
\subset G_{\epsilon}\cap D(a,R).$$
Therefore,
$$\widehat{w}\big(G_{\epsilon}\cap D(a,R)\big)\geq \widehat{w}\left(D\left(\tilde{a},1+\frac{|c_1|}{|c_2|}\right)\right),$$
which, combined with Lemma \ref{w-esti}, implies that there exists $C_{\ref{w-esti}}=C_{\ref{w-esti}}(R,w)>0$ such that
\begin{align*}
\widehat{w}\big(G_{\epsilon}\cap D(a,R)\big)&\geq\int_{D\left(\tilde{a},1+\frac{|c_1|}{|c_2|}\right)}\widehat{w}(\xi)dA(\xi)\\
&\geq\frac{1}{C_{\ref{w-esti}}}\left(1+\frac{|c_1|}{|c_2|}\right)^2w\big(D(a,R)\big)\\
&=\frac{1}{9C_{\ref{w-esti}}}\int_{D(a,R)}\frac{w\big(D(a,R)\big)}{\pi}dA(\xi)\\
&\geq\frac{1}{9C_{\ref{w-esti}}^2}\widehat{w}\big(D(a,R)\big).
\end{align*}
Hence \eqref{G-t} holds and the proof is complete.
\end{proof}

\subsection{Atomic decomposition of $F^p_{\alpha,w}$}

In this subsection, we use Theorem \ref{main2} to establish an atomic decomposition for the weighted Fock spaces $F^p_{\alpha,w}$ with $1<p<\infty$ and $w\in A^{\res}_p$. We will need the following lemma, which actually indicates that the lattice $r\Z^2$ is a sampling sequence for $F^p_{\alpha,w}$ whenever $r>0$ is small enough.

\begin{lemma}\label{sampling}
Let $0<p,\alpha<\infty$ and $w\in A_{\infty}^{\res}$. Then there exists $r_{\ref{sampling}}=r_{\ref{sampling}}(p,\alpha,w)>0$ satisfying that for each $r\in(0,r_{\ref{sampling}})$, there exists a constant $C_{\ref{sampling}}=C_{\ref{sampling}}(p,\alpha,r,w)\geq1$ such that
$$C_{\ref{sampling}}^{-1}\|f\|_{F^p_{\alpha,w}}^p
\leq\sum_{\nu\in r\mathbb{Z}^2}|f(\nu)|^pe^{-\frac{p\alpha}{2}|\nu|^2}w\big(D(\nu,1)\big)
\leq C_{\ref{sampling}}\|f\|_{F^p_{\alpha,w}}^p$$
holds for any $f\in F^p_{\alpha,w}$.
\end{lemma}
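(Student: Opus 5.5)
The plan is to apply Theorem \ref{main2} to the discrete measure
$$\mu_r:=\sum_{\nu\in r\Z^2}w\big(D(\nu,1)\big)\,r^{-2}\widehat{w}\big(D(\nu,r)\big)\big/w\big(D(\nu,1)\big)\cdot\delta_\nu ,$$
or, more simply, to $\mu:=\sum_{\nu\in r\Z^2}w(D(\nu,1))\delta_\nu$ after a rescaling. The upper estimate is elementary, so I would do it first. By Lemma \ref{sub-mean1} with radius $1$,
$$|f(\nu)|^pe^{-\frac{p\alpha}{2}|\nu|^2}\lesssim\int_{D(\nu,1)}|f|^pe^{-\frac{p\alpha}{2}|z|^2}dA.$$
By Lemma \ref{w-esti}, $w(D(\nu,1))\asymp\widehat{w}(z)$ for $z\in D(\nu,1)$. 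Each point lies in at most $C r^{-2}$ of the disks $D(\nu,1)$, $\nu\in r\Z^2$. Combining these gives the right-hand inequality with a constant depending on $r$, after Lemma \ref{hat-norm}.

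For the lower estimate I would take $\mu:=\sum_{\nu\in r\Z^2}w(D(\nu,1))\,\delta_\nu$, so that
$$\int|f|^pe^{-\frac{p\alpha}{2}|z|^2}d\mu$$
is exactly the sum in the statement. Theorem \ref{main2} must then be applied with a fixed $\epsilon$, $R$, $\delta$ that do not depend on $r$, and this is the main obstacle: $\mu$ depends on $r$, and so do $\|\widehat{\mu}_w\|_{L^\infty}$ and the set $G$.

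First I would estimate $\widehat{\mu}_w$. A disk $D(z,1)$ contains about $\pi r^{-2}$ lattice points, each with weight $\asymp w(D(z,1))$ by Lemma \ref{w-esti}. Hence $\|\widehat{\mu}_w\|_{L^\infty}\le C_1r^{-2}$, with $C_1$ depending only on $w$.

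Next I would estimate $\mu(D(z,r))/\widehat{w}(D(z,r))$ for an arbitrary $z$. Every disk $D(z,r)$ contains at least one point $\nu\in r\Z^2$, because $r\Z^2$ is $r/\sqrt2$-dense. Thus $\mu(D(z,r))\ge w(D(\nu,1))$. On the other hand, $\widehat{w}(D(z,r))\le \pi r^2C\,w(D(\nu,1))$, using Lemma \ref{w-esti} with radius $1$ (valid since $r<1/3$). Therefore
$$\frac{\mu(D(z,r))}{\widehat{w}(D(z,r))}\ge \frac{c_2}{r^{2}}\ge \frac{c_2}{C_1}\|\widehat{\mu}_w\|_{L^\infty}.$$
Choosing $\epsilon:=c_2/(2C_1)$, which is independent of $r$, gives $G=\C$. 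The density condition then holds trivially with any $R$, say $R=1$, and $\delta=1/2$.

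Theorem \ref{main2} with these fixed $p,\alpha,R,\epsilon,\delta,w$ now yields a threshold $r_{\ref{main2}}$. I set $r_{\ref{sampling}}:=\min\{1/3,r_{\ref{main2}}\}$. For $r<r_{\ref{sampling}}$ the theorem says $\mu$ is a reverse Carleson measure for $F^p_{\alpha,w}$, which is precisely the left-hand inequality.

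One subtlety must be checked. The reverse Carleson constant in Theorem \ref{main2} may depend on $\mu$, hence on $r$. That is harmless, since $C_{\ref{sampling}}$ is allowed to depend on $r$. What does have to be uniform in $r$ is $\epsilon$ and the density condition, and the argument above secures exactly that. The remaining care is in the counting argument for $\|\widehat{\mu}_w\|_{L^\infty}$: it must give a constant times $r^{-2}$ with the constant independent of $r$, which Lemma \ref{w-esti} with $R=2$ provides.
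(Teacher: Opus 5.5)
Your proof is correct and follows the same strategy as the paper. The upper bound comes from a sub-mean-value estimate plus the bounded overlap of the disks $D(\nu,1)$. The lower bound comes from Theorem \ref{main2} applied to $\mu=\sum_{\nu\in r\Z^2}w\big(D(\nu,1)\big)\delta_\nu$, using $\|\widehat{\mu}_w\|_{L^{\infty}}\lesssim r^{-2}$ and parameters $R,\epsilon,\delta$ chosen independently of $r$.

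The only difference is the test radius. You use the radius $r$ itself, so every $D(z,r)$ meets $r\Z^2$ and $G=\C$. The paper tests at radius $r/2$, gets $G=\bigcup_{\nu\in r\Z^2}D(\nu,r/2)$, and must verify the density condition by counting lattice points in $D(a,9/10)$. Your choice makes that step trivial.
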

\begin{proof}
By \cite[Lemma 3.1]{CFP}, we know that there exists $C=C(p,\alpha,w)>0$ such that for any $f\in F^p_{\alpha,w}$ and any $r>0$,
\begin{align*}
\sum_{\nu\in r\mathbb{Z}^2}|f(\nu)|^pe^{-\frac{p\alpha}{2}|\nu|^2}w\big(D(\nu,1)\big)
&\leq C\sum_{\nu\in r\Z^2}\int_{D(\nu,1)}|f(z)|^pe^{-\frac{p\alpha}{2}|z|^2}w(z)dA(z)\\
&\leq C\left(1+\frac{2}{r}\right)^2\|f\|_{F^p_{\alpha,w}}^p.
\end{align*}
We next concentrate on the first inequality. Let $C_{\ref{w-esti}}$ be the constant from Lemma \ref{w-esti} associated to $R=1$ and $w$, and let $r_{\ref{main2}}$ be the radius from Theorem \ref{main2} associated to $p,\alpha,R=1,w$, $\epsilon=4/9C_{\ref{w-esti}}^2$ and $\delta=49/400C_{\ref{w-esti}}^2$. Write $r_{\ref{sampling}}=\min\{2r_{\ref{main2}},1/5\}$. We are going to show that each $r\in(0,r_{\ref{sampling}})$ has the desired property. To this end, fix $r\in\big(0,r_{\ref{sampling}}\big)$, and define
$$G:=\bigcup_{\nu\in r\Z^2}D(\nu,r/2),\quad \mu:=\sum_{\nu\in r\Z^2}w\big(D(\nu,1)\big)\delta_{\nu},$$
where $\delta_{\nu}$ is the Dirac point mass at $\nu$. For any $a\in\C$, since $r<1/5$, we have $D(\nu,r/2)\subset D(a,1)$ whenever $|\nu-a|<9/10$. Consequently,
\begin{equation}\label{subset}
\bigcup_{\nu\in r\Z^2\cap D(a,9/10)}D(\nu,r/2)\subset G\cap D(a,1).
\end{equation}
Using Lemma \ref{w-esti}, we obtain that for any $\nu\in D(a,9/10)$,
\begin{equation}\label{w-geq}
\widehat{w}\big(D(\nu,r/2)\big)\geq\frac{r^2}{4C_{\ref{w-esti}}^2}\widehat{w}\big(D(a,1)\big).
\end{equation}
On the other hand, there exists $\nu_a\in r\Z^2$ such that $a\in Q_r(\nu_a)$, which implies
$$Q_{7/10}(\nu_a)\subset D(\nu_a,7/10)\subset D(a,9/10),$$
and so
\begin{equation}\label{number-geq}
|r\Z^2\cap D(a,9/10)|\geq|r\Z^2\cap Q_{7/10}(\nu_a)|\geq\frac{49}{100r^2}.
\end{equation}
Putting \eqref{subset}, \eqref{w-geq} and \eqref{number-geq} together, we obtain that for $a\in\C$,
\begin{align*}
\widehat{w}\big(G\cap D(a,1)\big)
&\geq\sum_{\nu\in r\Z^2\cap D(a,9/10)}\widehat{w}\big(D(\nu,r/2)\big)\\
&\geq\frac{r^2}{4C_{\ref{w-esti}}^2}\widehat{w}\big(D(a,1)\big)\cdot|r\Z^2\cap D(a,9/10)|\\
&\geq\frac{49}{400C_{\ref{w-esti}}^2}\widehat{w}\big(D(a,1)\big).
\end{align*}
We next claim that $\|\widehat{\mu}_w\|_{L^{\infty}}<\infty$ and
\begin{equation}\label{claim}
G=\left\{z\in\C:\frac{\mu\big(D(z,r/2)\big)}{\widehat{w}\big(D(z,r/2)\big)}
\geq\frac{4}{9C_{\ref{w-esti}}^2}\|\widehat{\mu}_w\|_{L^{\infty}}\right\}.
\end{equation}
In fact, Lemma \ref{w-esti} gives
$$\|\widehat{\mu}_w\|_{L^{\infty}}=\sup_{z\in\C}\frac{\mu\big(D(z,1)\big)}{w\big(D(z,1)\big)}
=\sup_{z\in\C}\frac{\sum_{\nu\in r\Z^2\cap D(z,1)}w\big(D(\nu,1)\big)}{w\big(D(z,1)\big)}\leq C_{\ref{w-esti}}\left(1+\frac{2}{r}\right)^2.$$
Moreover, it is clear that for any $z\notin G$, $\mu\big(D(z,r/2)\big)=0$. If $z\in G$, then there exists a unique $\nu_z\in r\Z^2$ such that $z\in D(\nu_z,r/2)$, and so Lemma \ref{w-esti} together with the fact that $r<1$ yields
\begin{align*}
\frac{\mu\big(D(z,r/2)\big)}{\widehat{w}\big(D(z,r/2)\big)}
=\frac{w\big(D(\nu_z,1)\big)}{\widehat{w}\big(D(z,r/2)\big)}
\geq\frac{4}{r^2C_{\ref{w-esti}}}
\geq\frac{4\|\widehat{\mu}_w\|_{L^{\infty}}}{C_{\ref{w-esti}}^2(2+r)^2}
>\frac{4}{9C_{\ref{w-esti}}^2}\|\widehat{\mu}_w\|_{L^{\infty}}.
\end{align*}
Therefore, \eqref{claim} holds. Since $r/2<r_{\ref{main2}}$, we deduce from Theorem \ref{main2} that $\mu$ is a reverse Carleson measure for $F^p_{\alpha,w}$. Consequently, there exists $C=C(p,\alpha,r,w)>0$ such that for any $f\in F^p_{\alpha,w}$,
$$\|f\|_{F^p_{\alpha,w}}^p\leq C\int_{\C}|f(z)|^pe^{-\frac{p\alpha}{2}|z|^2}d\mu(z)
=C\sum_{\nu\in r\Z^2}|f(\nu)|^pe^{-\frac{p\alpha}{2}|\nu|^2}w\big(D(\nu,1)\big).$$
The proof is complete.
%
%
\end{proof}

Based on Lemma \ref{sampling}, we can establish the following atomic decomposition for $F^p_{\alpha,w}$.

\begin{theorem}\label{atomic}
Let $1<p<\infty$, $0<\alpha<\infty$ and $w\in A_p^{\res}$. Then there exists $r_0>0$ such that for any $r\in(0,r_0)$, the space $F^p_{\alpha,w}$ consists exactly of the following functions:
\begin{equation}\label{decom}
f(z)=\sum_{\nu\in r\Z^2}c_{\nu}\frac{K_{\nu}(z)}{\|K_{\nu}\|_{F^p_{\alpha,w}}},\quad z\in\C,
\end{equation}
where $\{c_{\nu}\}_{\nu\in r\Z^2}\in l^p(r\Z^2)$. Moreover, there exists $C=C(p,\alpha,r,w)\geq1$ such that for all $f\in F^p_{\alpha,w}$,
$$C^{-1}\|f\|_{F^p_{\alpha,w}}\leq \inf\|\{c_{\nu}\}\|_{l^p(r\Z^2)}\leq C\|f\|_{F^p_{\alpha,w}},$$
where the infimum is taken over all sequences $\{c_{\nu}\}$ that give rise to the decomposition \eqref{decom}.
\end{theorem}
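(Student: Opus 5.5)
The proof follows the standard duality scheme for atomic decompositions: boundedness of a synthesis operator, plus sampling (Lemma \ref{sampling}) for a dual space, gives surjectivity. Fix $r<r_0$, where $r_0$ is the radius from Lemma \ref{sampling} applied to the dual weight $\sigma:=w^{-p'/p}$ with exponent $p'$. Note first that $w\in A^{\res}_p$ implies $\sigma\in A^{\res}_{p'}\subset A^{\res}_\infty$. We use the unweighted pairing
$$\langle f,g\rangle_\alpha=\frac{\alpha}{\pi}\int_{\C} f\,\overline{g}\,e^{-\alpha|z|^2}dA(z).$$
Since $P_\alpha$ is bounded on $L^p_{\alpha,w}$ (Isralowitz; CFP for $p=1$), the standard argument identifies $(F^p_{\alpha,w})^*$ with $F^{p'}_{\alpha,\sigma}$ under this pairing. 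This uses H\"older's inequality with $e^{-\alpha|z|^2}=e^{-\frac{\alpha}{2}|z|^2}w^{1/p}\cdot e^{-\frac{\alpha}{2}|z|^2}w^{-1/p}$ and the fact that $P_\alpha$ reproduces $F^p_{\alpha,w}$. The identification also yields norm equivalence, with dual norm $\asymp\|g\|_{F^{p'}_{\alpha,\sigma}}$.

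\emph{Step 1 (synthesis operator is bounded).} Define $T\{c_\nu\}=\sum_\nu c_\nu K_\nu/\|K_\nu\|_{F^p_{\alpha,w}}$. For $g\in F^{p'}_{\alpha,\sigma}$, the reproducing property gives $\langle T\{c_\nu\},g\rangle_\alpha=\sum_\nu c_\nu\overline{g(\nu)}/\|K_\nu\|$. By Lemma \ref{test},
$$\|K_\nu\|_{F^p_{\alpha,w}}\asymp e^{\frac{\alpha}{2}|\nu|^2}w(D(\nu,1))^{1/p}.$$
The $A_p^{\res}$ condition on unit disks, combined with Lemma \ref{w-esti} (which makes averages over comparable disks comparable), gives
$$w(D(\nu,1))^{1/p}\,\sigma(D(\nu,1))^{1/p'}\asymp 1.$$
Hence $|g(\nu)|/\|K_\nu\|\lesssim |g(\nu)|e^{-\frac{\alpha}{2}|\nu|^2}\sigma(D(\nu,1))^{1/p'}$. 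H\"older's inequality together with the upper (Carleson) half of Lemma \ref{sampling} for $F^{p'}_{\alpha,\sigma}$, which holds for every $r$, bounds the pairing by $\|\{c_\nu\}\|_{l^p}\|g\|_{F^{p'}_{\alpha,\sigma}}$. The series should first be treated for finitely supported sequences and then extended by density, with the duality identification giving convergence in norm. This step also shows pointwise convergence locally uniformly, so $T$ maps $l^p$ boundedly into $F^p_{\alpha,w}$. That proves that every function of the form \eqref{decom} lies in $F^p_{\alpha,w}$, and it gives $\|f\|\le C\inf\|\{c_\nu\}\|$.

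\emph{Step 2 (surjectivity with control).} The adjoint of $T$ under the pairing is
$$T^*g=\Bigl\{\overline{g(\nu)}/\|K_\nu\|_{F^p_{\alpha,w}}\Bigr\}_\nu\in l^{p'}.$$
By the weight comparison from Step 1, $\|T^*g\|_{l^{p'}}^{p'}\asymp \sum_\nu |g(\nu)|^{p'}e^{-\frac{p'\alpha}{2}|\nu|^2}\sigma(D(\nu,1))$. For $r<r_0$, the lower bound in Lemma \ref{sampling} (applied to $p',\sigma$) gives $\|T^*g\|\gtrsim\|g\|_{F^{p'}_{\alpha,\sigma}}$, so $T^*$ is bounded below. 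By the closed range theorem, $T$ is then onto $F^p_{\alpha,w}$, and the open mapping theorem supplies, for each $f$, coefficients with $\|\{c_\nu\}\|_{l^p}\le C\|f\|$. Combined with Step 1, this gives the two-sided estimate on $\inf\|\{c_\nu\}\|$.

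The main obstacle is making the duality $(F^p_{\alpha,w})^*\cong F^{p'}_{\alpha,\sigma}$ rigorous, including density of finite sums of kernels and the identification of $T^*$. This rests on the weighted boundedness of $P_\alpha$, and hence on the hypothesis $w\in A^{\res}_p$ rather than merely $w\in A^{\res}_\infty$. A second point to check is that $\sigma\in A^{\res}_{p'}$, so that Lemmas \ref{sampling}, \ref{test} and \ref{w-esti} apply to $\sigma$. This follows directly from the symmetric form of the $A_p^{\res}$ condition.
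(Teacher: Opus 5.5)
Your proposal is correct and follows the paper's proof. Both use the synthesis operator on $l^p(r\Z^2)$ and identify its adjoint with the normalized sampling map $g\mapsto\{g(\nu)\|K_{\nu}\|^{-1}_{F^p_{\alpha,w}}\}_{\nu}$ on $F^{p'}_{\alpha,\sigma}$, $\sigma=w^{-p'/p}$. The lower bound of Lemma \ref{sampling} for $(p',\sigma)$ then forces surjectivity, and a quotient/open-mapping argument gives the norm equivalence.

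The only deviation is that you derive boundedness of the synthesis operator by duality from the Carleson half of the sampling inequality, which holds for every $r>0$, where the paper cites \cite[Proposition 4.2]{CFP}. One small correction: in $w\big(D(\nu,1)\big)^{1/p}\sigma\big(D(\nu,1)\big)^{1/p'}\asymp1$, the lower bound comes from H\"older's inequality, and only the upper bound comes from the $A_p^{\res}$ condition.
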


Before proceeding, we recall some preliminary results. Let $1<p<\infty$, $0<\alpha<\infty$ and $w\in A_p^{\res}$, and define the integral pairing $\langle\cdot,\cdot\rangle_{\alpha}$ by
$$\langle f,g\rangle_{\alpha}:=\frac{\alpha}{\pi}\int_{\C}f(z)\overline{g(z)}e^{-\alpha|z|^2}dA(z).$$
It was proved in \cite[Lemma 3.1]{CHW} that the dual space of $F^p_{\alpha,w}$ can be identified with $F^{p'}_{\alpha,w'}$ under the pairing $\langle \cdot,\cdot\rangle_{\alpha}$, where $w':=w^{-p'/p}$ is the dual weight of $w$. Moreover, $w'\in A_{p'}^{\res}$, and H\"older's inequality ensures that
\begin{equation}\label{w-w'}
w\big(D(z,1)\big)^{1/p}\asymp w'\big(D(z,1)\big)^{-1/p'},\quad z\in\C.
\end{equation}
Here and in the sequel, we write $A\asymp B$ to denote that $C^{-1}B\leq A\leq CB$ for some inessential constant $C>0$. Using \cite[Proposition 3.3 and Lemma 3.5]{CFP}, one can obtain that
\begin{equation}\label{g=pg}
g(z)=P_{\alpha}g(z)=\langle g,K_z\rangle_{\alpha},\quad z\in\C,\quad g\in F^{p'}_{\alpha,w'}.
\end{equation}
We are now ready to prove Theorem \ref{atomic}.

\begin{proof}[Proof of Theorem \ref{atomic}]
For any $r>0$, let the operator $S$ be defined for $\{c_{\nu}\}\in l^p(r\Z^2)$ by
$$S(\{c_{\nu}\}):=\sum_{\nu\in r\Z^2}c_{\nu}\frac{K_{\nu}}{\|K_{\nu}\|_{F^p_{\alpha,w}}}.$$
Due to \cite[Proposition 4.2]{CFP}, $S$ is bounded from $l^p(r\Z^2)$ into $F^p_{\alpha,w}$. We now determine the adjoint of $S$. For any $\{c_{\nu}\}_{r\Z^2}\in l^p(r\Z^2)$ and $g\in F^{p'}_{\alpha,w'}$, Fubini's theorem together with \eqref{g=pg} yields that
\begin{align*}
\langle\{c_{\nu}\},S^*g\rangle_{l^2}
&=\langle S(\{c_{\nu}\}),g\rangle_{\alpha}\\
&=\frac{\alpha}{\pi}\int_{\C}\left(\sum_{\nu\in r\Z^2}c_{\nu}\frac{K_{\nu}(z)}{\|K_{\nu}\|_{F^p_{\alpha,w}}}\right)
    \overline{g(z)}e^{-\alpha|z|^2}dA(z)\\
&=\sum_{\nu\in r\Z^2}c_{\nu}\|K_{\nu}\|_{F^p_{\alpha,w}}^{-1}
    \frac{\alpha}{\pi}\int_{\C}\overline{g(z)}K_{\nu}(z)e^{-\alpha|z|^2}dA(z)\\
&=\sum_{\nu\in r\Z^2}c_{\nu}\overline{g(\nu)}\|K_{\nu}\|_{F^p_{\alpha,w}}^{-1}.
\end{align*}
Therefore, the adjoint of $S$ is given by
$$S^*g=\left\{g(\nu)\|K_{\nu}\|_{F^p_{\alpha,w}}^{-1}\right\}_{\nu\in r\Z^2},\quad g\in F^{p'}_{\alpha,w'}.$$
Let $r_0$ be the radius from Lemma \ref{sampling} associated with $p'$, $\alpha$ and $w'$. Then for $r\in(0,r_0)$, we combine Lemmas \ref{test}, \ref{sampling} with the estimate \eqref{w-w'} to obtain that for any $g\in F^{p'}_{\alpha,w'}$,
\begin{align*}
\|S^*g\|_{l^{p'}(r\Z^2)}^{p'}
&=\sum_{\nu\in r\Z^2}|g(\nu)|^{p'}\|K_{\nu}\|^{-p'}_{F^p_{\alpha,w}}\\
&\asymp\sum_{\nu\in r\Z^2}|g(\nu)|^{p'}e^{-\frac{p'\alpha}{2}|\nu|^2}w\big(D(\nu,1)\big)^{-p'/p}\\
&\asymp\sum_{\nu\in r\Z^2}|g(\nu)|^{p'}e^{-\frac{p'\alpha}{2}|\nu|^2}w'\big(D(\nu,1)\big)\\
&\asymp\|g\|^{p'}_{F^{p'}_{\alpha,w'}},
\end{align*}
where the implicit constants are independent of $g$. Therefore, $S^*:F^{p'}_{\alpha,w'}\to l^{p'}(r\Z^2)$ is an isomorphic embedding onto a closed subspace of $l^{p'}(r\Z^2)$, which implies that $S:l^p(r\Z^2)\to F^p_{\alpha,w}$ is onto. Consequently, for any $f\in F^p_{\alpha,w}$, there exists some $\{c_{\nu}\}\in l^p(r\Z^2)$ such that
$$f(z)=S(\{c_{\nu}\})(z)=\sum_{\nu\in r\Z^2}c_{\nu}\frac{K_{\nu}(z)}{\|K_{\nu}\|_{F^p_{\alpha,w}}},\quad z\in\C.$$
It remains to establish the norm estimate. Let $\mathrm{Ker}\,S$ be the kernel of $S$, and let the operator $\tilde{S}$ be defined on the quotient space $l^p(r\Z^2)/\mathrm{Ker}\,S$ by
$$\tilde{S}(\{c_{\nu}\}+\mathrm{Ker}\,S):=S(\{c_{\nu}\}).$$
Then $\tilde{S}$ is an isomorphism from $l^p(r\Z^2)/\mathrm{Ker}\,S$ onto $F^p_{\alpha,w}$. Hence for any $f\in F^p_{\alpha,w}$,
$$\|f\|_{F^p_{\alpha,w}}\asymp\|\tilde{S}^{-1}f\|_{l^p(r\Z^2)/\mathrm{Ker}\,S}=\inf\|\{c_{\nu}\}\|_{l^p(r\Z^2)},$$
where the implicit constant is independent of $f$, and the infimum is taken over all sequences $\{c_{\nu}\}$ with $f=S(\{c_{\nu}\})$. The proof is complete.
\end{proof}

\medskip





\end{document}